\keywords{amalgamated free product, Nielsen transformation, free group, surface group, HNN extension}
\newcommand{\astA}{\underset{A}{\ast}}
\newcommand{\rk}{\mathop{\rm rk}\nolimits}
\let\phi=\varphi
\let\precc=\preccurlyeq
\theoremstyle{plain} %\crefname{satz}{Satz}{S\"atze}
\def\cf{{\em cf.}}
\def\ie{{\em i.e.}}
\begin{document}

\title[Subgroups of Cyclically Amalgamated Free Products]{Subgroups of Cyclically Amalgamated Free Products}

\author[M.~Kreuzer]{Martin Kreuzer}	
\address{Faculty of Computer Science and Mathematics, University of Passau, 94030 Passau, Germany}
\email{Martin.Kreuzer@uni-passau.de}

\author[A.~Moldenhauer]{Anja Moldenhauer}	
\address{Hammer Steindamm 111, 20535 Hamburg, Germany}	
\email{A.Moldenhauer@gmx.net}

\author[G.~Rosenberger]{Gerhard Rosenberger}	
\address{Department of Mathematics, University of Hamburg, Bundesstr. 55, 20146 Hamburg, Germany}
\email{Gerhard.Rosenberger@uni-hamburg.de}

\thanks{2020 \textit{Mathematics Subject Classification.} 20E06, 20E07, 68W30.}

%%%%%%%%%%%%%%%%%%%%%%%%%%%%%%%%%%%%%%%%%%%%%%%%%%%%%%%%%%%%%%%%%%%%%%%%%%%

%% the abstract has to PRECEDE the command \maketitle:
%% be sure not to issue the \maketitle command twice!

\begin{abstract}
\noindent 
Given a group $G = H_1 \astA H_2$ which is the free product of 
two finitely generated groups $H_1$ and $H_2$ with amalgamation
over a cyclic subgroup~$A$ which is malnormal in~$G$, we study relations between
the structure of its subgroups and the structure of the group~$G$ itself.
Firstly, we show that if $H_1$ and $H_2$ are 3-free products of cyclics of rank $\ge 3$ 
then $G$ is also a 3-free product of cyclics. Secondly, we prove that if $H_1$ and $H_2$ 
are 4-free products of cyclics of rank $\ge 4$
then every 4-generated subgroup of $G$ is a free product of $\le 4$ cyclics or a 1-relator
quotient of a free product of four cyclic groups.
Here a group is called an $n$-free product of cyclics if every $n$-generated subgroup
is a free product of $\le n$ cyclic groups. These results are based
on ubiquitous applications of the Nielsen method for amalgamated free products
which we recall carefully.

Lastly, given an infinite, finitely presented group which is not free, but all of its 
infinite index subgroups are free, a well-known conjecture says that it is isomorphic to a surface group. 
We revisit and elaborate on predominantly group theoretic proofs of this conjecture 
for cyclically amalgamated products as above, as well as for certain HNN extensions.
\end{abstract}

\maketitle
\hfill{\small \it Dedicated to Alexei Miasnikov}

\hfill{\small \it on the occasion of his birthday.}

\section{Introduction}\label{sec1}

Amalgamated free products arise naturally in group theory and other branches of mathematics.
For instance, surface groups (\ie, fundamental groups of compact 2-dimensional manifolds),
various knot groups, and the well-known Baumslag-Solitar groups are of this type. 
For two factors, amalgamated free products were first introduced in 1927 by O.~Schreier~\cite{Sch}.
For several factors, this was later generalized by H.\ Neumann~\cite{Neu1,Neu2}.
W.\ Magnus was one of the first mathematicians who recognized their value and used them in the
early 1930s in several papers on one-relator groups~\cite{Mag1,Mag2}.

It is an important task to describe the subgroups of amalgamated free products and the
relation between their structure and properties of the group itself. A first major step in this direction 
was taken by A.\ Karrass and D.\ Solitar in~\cite{KS}, where the authors give a very general 
and rather abstract classification which is, however, not easy to apply in concrete situations.
In this paper we examine the structure of subgroups with a small number of generators 
and the existence of free subgroups and free products of cyclic groups in the following type of group.
Let $G=H_1 \astA H_2$, where $H_1,H_2$ are finitely generated groups, where $A$ is an infinite cyclic group
which injects properly into~$H_1$ and~$H_2$ and is a malnormal subgroup of~$G$,
and where we assume that there exist $u\in H_1$, $v\in H_2$ with $A=\langle u\rangle = \langle v\rangle$
such that $uv$ involves all generators of~$G$. For simplicity, we shall say that~$G$ is a
\textit{cyclically amalgamated free product}.

In~\cite{FRW}, the authors showed that, for $n\in \{2,3\}$, every $n$-generated
subgroup in certain groups of F-type is a free product of cyclics. Later it was
discovered that this classification can be suitably extended to 4-generated subgroups.
In Section~\ref{sec3} we generalize these results as follows (see Proposition~\ref{prop:2-gen}
and Theorems~\ref{thm:3-gen} and~\ref{thm:4-gen}).

\smallskip
\begin{thm}
Let $G=H_1 \astA H_2$ be a cyclically amalgamated free product.
\begin{enumerate}
\item[(a)] If $H_i$ satisfies $\rk(H_i)\ge 2$ and is a 2-free product of cyclics for $i=1,2$,
then every 2-generated subgroup of~$G$ is a free product of $\le 2$ cyclics.

\item[(b)] If $H_i$ satisfies $\rk(H_i)\ge 3$ and is a 3-free product of cyclics for $i=1,2$,
then every 3-generated subgroup of~$G$ is a free product of $\le 3$ cyclics.

\item[(c)] If $H_i$ satisfies $\rk(H_i)\ge 4$ and is a 4-free product of cyclics for $i=1,2$,
then every 4-generated subgroup of~$G$ is a free product of $\le 4$ cyclics or a one-relator quotient of
a free product of four cyclics.
\end{enumerate}
\end{thm}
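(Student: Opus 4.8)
The plan is to prove (a), (b), and (c) by a single application of the Nielsen method for the amalgamated product $G = H_1 \astA H_2$, organised according to the syllable lengths of a reduced generating system. Starting from an $n$-generated subgroup $U = \langle x_1, \dots, x_n\rangle$, I would first carry $\{x_1,\dots,x_n\}$ by elementary Nielsen transformations to a Nielsen reduced system $\{y_1,\dots,y_m\}$ with $m \le n$, using the reduction theorem for amalgamated free products recalled above. The point of reduction is that it simultaneously minimises the syllable lengths of the generators and controls cancellation in products of the $y_i$, so that the isomorphism type of $U$ can be read off from the length profile of the reduced system.

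The generic analysis then proceeds by sorting the reduced generators by length. A generator of length $0$ lies in a conjugate of $A$, one of length $1$ lies in a conjugate of a factor $H_1$ or $H_2$, and the reduction conditions prevent any nontrivial reduced word in the generators of length $\ge 2$ from collapsing, so these generate a free group that splits off as a free factor. The malnormality of $A$ is decisive here: if two generators of length $\le 1$ lay in distinct conjugates of the factors, a shared nontrivial power would force a nontrivial element of $gAg^{-1}\cap A$, which malnormality forbids; consequently the factor-type generators fall into conjugates that meet only trivially and assemble into a free product. Applying the hypothesis that each $H_i$ is an $n$-free product of cyclics with $\rk(H_i)\ge n$ to each factor-type block, and combining with the free part, exhibits $U$ as a free product of at most $n$ cyclic groups, the bound on the number of factors coming from a rank count. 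For $n=2$ and $n=3$ one checks that every length profile compatible with two or three generators falls into this pattern, which proves (a) and (b).

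The main obstacle is part (c). At $n=4$ the Nielsen reduction admits a single genuinely exceptional configuration that cannot be driven to a free product: a system of four generators whose amalgamated parts overlap in a way that malnormality does not forbid, producing one nontrivial relation of surface type and hence a one-relator quotient of a free product of four cyclics rather than a free product itself. The delicate step is the bookkeeping of cancellation in products of generators sharing the amalgamated subgroup $A$: malnormality eliminates every overlap except one surface-relation-type configuration, and I expect the heart of the proof to lie in enumerating the admissible four-generator length profiles, discarding all but this exceptional one as reducible to the generic case, and then identifying the surviving configuration explicitly as the asserted one-relator quotient.
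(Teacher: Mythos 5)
Your generic analysis rests on a misreading of what the Nielsen method delivers here, and the misreading sits exactly where part~(c) gets hard. Outcome~(3) of Theorem~\ref{thm:3cases} is not that the factor-type generators split off as free factors; it is that a subset of the reduced generators lies in a conjugate of~$H_1$ or~$H_2$ \emph{with product conjugate into $A\setminus\{1\}$} (Remark~\ref{rem:case3}). Malnormality of~$A$ does not separate the two blocks: since $H_1\cap H_2=A$, a word in $x_1,x_2\in H_1$ that equals $u^k\in A$ is simultaneously the element $v^{-k}\in H_2$ and interacts with generators $x_3,x_4\in H_2\setminus A$ inside~$H_2$. This interaction runs through the \emph{same} copy of~$A$, which malnormality (a condition on $gAg^{-1}\cap A$ for distinct conjugates) does nothing to forbid, and it is precisely the mechanism that produces the one-relator quotient in~(c). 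If your claim that ``malnormality eliminates every overlap'' were correct, every $4$-generated subgroup would be a free product of cyclics and the one-relator alternative in the statement would be vacuous; nothing in the hypotheses rules out, say, $x_1=u$ together with $x_3,x_4\in H_2\setminus A$ satisfying a relation with $v^{-1}$ inside~$H_2$. In the paper, malnormality is used for something quite different: to normalize the reduced form of a generator $x_3\notin H_1$ so that its first and last syllables lie in $H_2\setminus H_1$, whence no shortening Nielsen transformations occur between $H_1$ and~$x_3$; this is how the mixed cases ($x_1,\dots,x_{n-1}\in H_1$, $x_n\notin H_1$) are dispatched for $n=3$ and $n=4$, a step your length-profile sorting also glosses over.

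For~(c) itself you name the right target (one exceptional configuration yielding a one-relator quotient) but supply no mechanism; ``enumerating the admissible four-generator length profiles'' is a plan, not an argument, and the residual relation is not ``of surface type'' in general --- no such claim appears in the paper. The actual route is a bootstrap: reduce to $x_1,x_2\in H_1$ and $x_3,x_4\in H_2\setminus A$, take the minimal $k\ge 1$ with $u^k\in\langle x_1,x_2\rangle$, and apply the already-proven $3$-generator theorem to $V=\langle v^{-k},x_3,x_4\rangle$. If $V$ is a free product of three cyclics, one shows the elements of $\langle x_1,x_2\rangle$ do not influence $\langle x_3,x_4\rangle$ and $U$ is a free product of cyclics; if $V=\langle a\rangle\ast\langle b\rangle$, one first removes conjugates $cv^{-\ell}c^{-1}\in V$ with $1\le\ell\le k-1$ by iterated conjugation (Situation~I of the paper's proof), and in the terminal Situation~II the defining relation $R(v^{-k},x_3,x_4)=1$ of~$V$, rewritten via $v^{-k}=u^k\in\langle x_1,x_2\rangle$, becomes the single relator $R'(x_1,x_2,x_3,x_4)=1$. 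Without this inductive use of the $n=3$ case on $\langle v^{-k},x_3,x_4\rangle$ and the minimal-power/conjugation analysis, your sketch cannot be completed, and with your separation claim in place it would prove something false about where the relator can come from.
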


Here we say that a group is an \textit{$n$-free product of cyclics}
if every $n$-generated subgroup of~$G$ is a free product of at most~$n$ cyclic groups.
Notice that this notion is entirely different from the the classical notion
of \textit{$n$-free groups}, as introduced by G.\ Higman in 1951 (\cf~\cite{Hig}).
In particular, the factors of a product of at most $n$ cyclic groups 
are allowed to be finite.

Moreover, we point out that case (a) of this theorem has been known for a long time
(see~\cite[Thm.~6]{KS2}) and is included here merely for completeness sake.

In Section~\ref{sec2} we provide the main tool to prove this theorem, 
namely the method of Nielsen reductions in amalgamated free products. 
In the context of free groups, Nielsen reductions were first introduced (1921) in~\cite{Nie}. 
Later the method was adapted to amalgamated free products
by H.\ Zieschang in~\cite{Zie1}. It was further refined
by the third author in~\cite{Ros1,Ros2} and, together with R.N.\ Kalia, in~\cite{KR}. 
Based on the normal form of elements of amalgamated free products introduced 
already (1927) in~\cite{Sch}, we briefly recall the \textit{symmetric normal form} (see Proposition~\ref{prop:symmetric})
and the resulting preorder $\precc$ which refines the length (see Definition~\ref{def:preorder}). 
This allows us to introduce Nielsen reduced sets and to formulate the Nielsen method for
cyclically amalgamated free products in Theorem~\ref{thm:3cases} and Proposition~\ref{prop:refine}.
Explicit algorithms and a careful complexity analysis for the computation of normal forms
in amalgamated free products were given in 2006 by A.V.\ Borovik, A.\ Miasnikov, and V.N.\ 
Remeslennikov (see~\cite{BMR}).

Next we discuss low rank subgroups of cyclically amalgamated free products in Section~\ref{sec3}.
Besides proving the above theorem, we also apply it to recover the known results about
low rank subgroups of groups of F-type (see Corollary~\ref{cor:F-type}).
In the proof of these results, the method of Nielsen reductions plays a central role,
as it allows us to keep a tight control over the structure of the words representing the group
elements under consideration. It has been suggested that, possibly, the method of groups acting on
trees, as developed by H.\ Bass and J.P.\ Serre in~\cite{BS}, and in particular in the form 
elaborated by R.\ Weidmann (\cf~\cite{Wei}), could be used to obtain a proof of the above theorem.
Except for case~(a) of the theorem, which has been previously known, this is highly doubtful, 
as in this case the Bass-Serre method
encounters certain obstructions which are hard to analyze in detail (see~\cite[Thm.~2]{Wei})
and do not lead to explicit presentations of the subgroups under consideration
in a conceivable way.

In Section~\ref{sec4} we turn to free subgroups of infinite index. 
The discussion here should be considered as a brief survey of some direct
methods to analyze these subgroups for amalgamated free products and HNN extensions 
using group theoretic (rather than geometric or topological) methods.

The central conjecture here is the \textit{Surface Group
Conjecture} which asks whether an infinite, finitely presented, non-free group, such that every subgroup
of infinite index is free, has to be isomorphic to a surface group.
A major advance in this area was recently achieved by H.\ Wilton who proved the conjecture
(among others) for one-relator groups using heavy topological machinery~\cite{Wil2}.
In Theorem~\ref{thm:SGCforAFP}, we point out a proof in the case of cyclically amalgamated
free products which relies chiefly on group theoretic methods.

Closely related to amalgamated free products is the case of HNN extensions.
In Theorem~\ref{thm:23HNN}, we recall what is known about low rank subgroups
of HNN extensions of the type $G = \langle t,H \mid tut^{-1}=v\rangle$,
and in Theorem~\ref{thm:SGCforHNN} we prove the surface group conjecture
for such groups using chiefly group theoretic means.

Throughout we have strived to keep this paper as self-contained as possible.
All definitions and statements used without reference can be found
in~\cite{FMRSW}.

%%%%%%%%%%%%%%%%%%%%%%%%%%%%%%%%%%%%%%%%%%%%%%%%%%%%%%%%%%%%%%%%%%%%%%%%%%%%%%%
%
% Section 2: Nielsen Reduction in Amalgamated Free Products
%
%%%%%%%%%%%%%%%%%%%%%%%%%%%%%%%%%%%%%%%%%%%%%%%%%%%%%%%%%%%%%%%%%%%%%%%%%%%%%%%

\section{Nielsen Reduction in Amalgamated Free Products}\label{sec2}

In this section we describe the Nielsen reduction method in amalgamated free products.
This method was originally developed by H.\ Zieschang in~\cite{Zie1} and then refined
by the third author in~\cite{Ros1,Ros2} and, together with R.N.\ Kalia, in~\cite{KR}. 

Given two groups $H_1,H_2$ and a group~$A$ which injects into both of them
via maps $\phi:\; A\rightarrow H_1$ and $\psi:\; A \rightarrow H_2$,
their \textbf{amalgamated free product} (or \textbf{free product with
amalgamation}) is $G= H_1 \astA H_2 = (H_1 \ast H_2)/N$ where~$N$ is the normal 
closure of the set of elements $\phi(a)\psi^{-1}(a)$ with $a\in A$.
We identify $H_1$, $H_2$, and $A=H_1\cap H_2$ with their images in~$G$ and
assume that $A\ne H_1$ as well as $A\ne H_2$. Notice that $G = H_1 \ast H_2$
is the free product of~$H_1$ and~$H_2$ if $A=\{1\}$ is the trivial group.

In this paper we consider the following type of amalgamated free product.

\begin{asm}\label{ass:A}
Let $H_1,H_2$ be finitely generated groups, and let $A$ be an infinite
cyclic group which injects properly into~$H_1$ and~$H_2$ via $\phi:\; A \rightarrow H_1$
and $\psi:\; A \rightarrow H_2$. Let $G= H_1 \astA H_2$ be the amalgamated free product
of~$H_1$ and~$H_2$. Assume that the following conditions are satisfied.
\begin{enumerate}
\item[(1)] Given a generator~$e$ of~$A$, let $u=\phi(e)$ and $v=\psi(e)$. Then~$uv$ 
involves all generators of~$G$.

\item[(2)] The image of~$A$ in~$G$ is a malnormal subgroup.
\end{enumerate}
\end{asm}

For simplicity, we shall say that $G$ is a \textbf{cyclically amalgamated free product}
if the assumption is satisfied.

Next we fix a system~$L_i$ of left coset representatives of~$A$ in~$H_i$
for $i=1,2$. Here we represent~$A$ by~$1$. The following proposition is
well-known (see~\cite[Thm. 1.5.8 and Cor. 1.5.10]{FMRSW}).

\begin{prop}[The Reduced Form]$\mathstrut$\label{prop:reduced}\\
Every element $g\in G$ has a unique representation $g = h_1 h_2 \cdots h_m a$,
where $a\in A$ and the elements $h_i$ are alternatingly from $L_1\setminus \{1\}$
and $L_2\setminus \{1\}$. This representation is called the \textbf{reduced form}
of~$G$.
\end{prop}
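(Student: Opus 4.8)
The plan is to prove the existence and uniqueness of the reduced form separately, following the classical Schreier–van der Waerden argument.

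\smallskip
\noindent\textbf{Existence.}
First I would establish existence by a straightforward induction on the syllable length of a word representing $g$ in the free product $H_1 \ast H_2$. Given any expression $g = g_1 g_2 \cdots g_k$ with the $g_j$ alternating between $H_1$ and $H_2$, I rewrite each factor $g_j$ using its chosen coset representative: write $g_j = h_j a_j$ with $h_j \in L_i \setminus \{1\}$ (or $h_j = 1$) and $a_j \in A$. Since $A$ is central to neither factor in general, I cannot simply collect the $a_j$ at the end; instead I push the amalgamated element $a_j \in A = H_1 \cap H_2$ into the next syllable, which lies in the opposite factor, and re-normalize. Each such step either keeps the number of nontrivial coset representatives the same or, when a factor collapses to lie in $A$, strictly decreases it, allowing two neighbouring syllables from the same factor to merge. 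Iterating until no further reduction is possible yields a representation $g = h_1 h_2 \cdots h_m a$ of the required alternating form with $a \in A$.

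\smallskip
\noindent\textbf{Uniqueness.}
The uniqueness is the substantive part, and the cleanest route is the permutation (van der Waerden) argument rather than a direct combinatorial cancellation analysis. I would let $S$ denote the set of all formal reduced sequences $(h_1, \ldots, h_m; a)$ and construct a homomorphism from $H_1 \ast H_2$ into the symmetric group on $S$ by specifying, for each $h \in H_1$ and each $h \in H_2$, a permutation of $S$ that describes left multiplication. One checks that these permutations respect the defining relations of each factor $H_i$ and that the actions of $H_1$ and $H_2$ agree on the amalgamated subgroup $A$, so that they descend to a well-defined action of $G = H_1 \astA H_2$ on $S$. Applying the image of $g$ to the base sequence $(\,;1)$ then recovers the reduced form, and since the group action is by genuine functions, two reduced sequences representing the same element must coincide.

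\smallskip
\noindent\textbf{Main obstacle.}
The delicate point, and the step I expect to require the most care, is verifying that the candidate left-multiplication permutations are well-defined and compatible across the amalgamation: when multiplying a reduced sequence on the left by an element $h \in H_i$, one must correctly handle the case where $h$ times the leading representative $h_1$ falls back into $A$, triggering a cascade into the next syllable. Ensuring these case distinctions are mutually consistent, that the two factor actions restrict to the \emph{same} permutation on $A$, and that associativity holds, is precisely what forces the representation to be unique. Rather than reproving this from scratch, I would simply invoke the cited results \cite[Thm.~1.5.8 and Cor.~1.5.10]{FMRSW}, since this normal form theorem is entirely standard; the purpose here is only to fix the notation $g = h_1 h_2 \cdots h_m a$ that the Nielsen reduction machinery of the subsequent sections relies upon.
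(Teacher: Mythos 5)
Your proposal is correct and takes essentially the same route as the paper: the paper offers no proof of this proposition at all, simply citing \cite[Thm.~1.5.8 and Cor.~1.5.10]{FMRSW}, which is precisely the citation you fall back on in the end. Your sketch of the existence induction (pushing the $A$-components rightward through the syllables) and the van der Waerden permutation argument for uniqueness is the standard classical proof underlying that reference, and it correctly identifies the one delicate point, namely the well-definedness of the left-multiplication action when a leading representative collapses into~$A$.
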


For $g\in G$ with reduced form $g= h_1 \cdots h_m a$, the number~$m$ is called 
the \textbf{length} of~$g$. For our purposes, the partial ordering for~$G$ 
given by the length is too coarse. Thus we have to refine it. 
For $i=1,2$, we take the elements of~$L_i^{-1}$
as a system of representatives for the right cosets of~$A$ in~$H_i$.
Now we have the following normal form (see~\cite[Sect.~1.5.2]{FMRSW} or~\cite[Prop.~2.3.12]{ZVC}).

\begin{prop}[The Symmetric Normal Form]$\mathstrut$\label{prop:symmetric}\\
Every element $g\in G$ has a unique representation 
\[
g \;=\; \ell_1 \cdots \ell_m \;k \; r_m \cdots r_1
\]
with $k\in H_1\cup H_2$, with $\ell_i$ alternatingly from $L_1\setminus \{1\}$ and
$L_2 \setminus \{1\}$, and with $r_i$ alternatingly from $L_1^{-1} \setminus \{1\}$
and $L_2^{-1}\setminus \{1\}$.

Moreover, if $k\in A$ then $\ell_m$ and $r_m$ belong to different
subgroups~$H_i$ (provided $m\ge 1$), and if $k\in H_i\setminus A$ then $\ell_m,r_m \notin H_i$ 
(provided $m\ge 1$).

This representation is called the \textbf{symmetric normal form} of~$g$.
The product $\ell_1\cdots \ell_m$ is called its \textbf{leading half}, the product
$r_m \cdots r_1$ is called its \textbf{rear half}, and the element~$k$ is known as the
\textbf{kernel} of~$g$.
\end{prop}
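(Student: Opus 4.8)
The plan is to derive the symmetric normal form from the reduced form of Proposition~\ref{prop:reduced} by peeling coset representatives off both ends simultaneously, arguing by induction on the length~$n$ of the reduced form. For existence, the base cases $n\le 1$ are immediate: then $g\in H_1\cup H_2$, and we set $m=0$ and $k=g\in H_1\cup H_2$. For $n\ge 2$, write the reduced form $g=h_1\cdots h_n a$ with $a\in A$, and say $h_1\in H_{i_1}$ and $h_n\in H_{i_n}$ with $i_1,i_n\in\{1,2\}$. I would take $\ell_1:=h_1\in L_{i_1}\setminus\{1\}$ for the left end. For the right end, since the nontrivial coset representative $h_n$ lies outside~$A$, the product $h_n a$ lies in $H_{i_n}\setminus A$, so it has a unique expression $h_n a = b\,r_1$ with $b\in A$ and $r_1\in L_{i_n}^{-1}\setminus\{1\}$ a nontrivial right-coset representative. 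The middle element $g':=h_2\cdots h_{n-1}\,b$ then has reduced form $h_2\cdots h_{n-1}\,b$ of length $n-2$, because the factor $b\in A$ is absorbed into the trailing $A$-part without altering the syllables $h_2,\dots,h_{n-1}$; hence the inductive hypothesis applies to~$g'$.

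Next I would reassemble $g=\ell_1\,g'\,r_1$ and substitute the symmetric normal form $g'=\ell_2\cdots\ell_m\,k\,r_m\cdots r_2$ furnished by induction. The alternation conditions hold at the two new junctions because $h_1,h_2$ (hence $\ell_1,\ell_2$) lie in different factors, and symmetrically on the right. The two ``moreover'' assertions then follow from the parity of~$n$, since the recursion removes exactly two syllables at a time and terminates at a middle of reduced length~$0$ or~$1$. When $n$ is even the terminal middle lies in~$A$, giving $k\in A$, and the outermost surviving syllables $\ell_m,r_m$ come from two consecutive, hence distinct, factors; when $n$ is odd the terminal middle is a single syllable in some $H_i\setminus A$, giving $k\in H_i\setminus A$, while $\ell_m$ and $r_m$ are drawn from the two factors neighbouring that syllable, both different from~$H_i$.

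For uniqueness I would show that the whole datum $(\ell_1,\dots,\ell_m,k,r_m,\dots,r_1)$ is forced by~$g$ by converting any symmetric normal form back into a reduced form and invoking the uniqueness already guaranteed by Proposition~\ref{prop:reduced}. Concretely, one writes $k=\ell\,c$ with a left-coset representative~$\ell$ and $c\in A$ (with $\ell\ne 1$ exactly when $k\notin A$), and then pushes the $A$-valued ``carry'' rightward through $r_m\cdots r_1$: at each step $c\,r_j$ lies in one factor and is rewritten as a left-coset representative times a new carry, the representative being nontrivial precisely because $r_j\notin A$. This produces a reduced form whose syllable count equals $n$, and the two ``moreover'' conditions are exactly what guarantees that no merging shortens it at the junctions. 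Since this conversion is inverted by the peeling procedure used for existence, the symmetric normal form of~$g$ is determined by its reduced form, and uniqueness follows. The main obstacle I anticipate is precisely this bookkeeping of the $A$-valued carries when translating between the left-coset representatives $L_i$ and the right-coset representatives $L_i^{-1}$: one must verify at each step that a nontrivial representative stays nontrivial after absorbing an element of~$A$, and that the terminal middle lands in exactly the case dictated by the parity of~$n$. Everything else is a routine verification of the alternation pattern created by the recursion.
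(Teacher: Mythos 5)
Your argument is correct: the double-ended peeling induction on the reduced form (taking $\ell_1=h_1$ on the left, rewriting $h_na=b\,r_1$ with $r_1\in L_{i_n}^{-1}\setminus\{1\}$ on the right, and recursing on the length-$(n-2)$ middle), the parity analysis yielding the two ``moreover'' clauses, and the carry-pushing conversion of a symmetric normal form back into a reduced form --- where the nontriviality of each new representative follows from $r_j\notin A$ and the ``moreover'' conditions guarantee alternation at the two junctions --- together give a complete existence and uniqueness proof. The paper itself supplies no proof of this proposition, citing instead \cite[Sect.~1.5.2]{FMRSW} and \cite[Prop.~2.3.12]{ZVC}, and your proposal is essentially the standard derivation from Proposition~\ref{prop:reduced} found in those sources, so it matches the intended argument.
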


For the definition of the \textbf{length} $\lambda(g)$ of a symmetric normal form, we use 
$\lambda(g)=2m$ if $k\in A$ and $\lambda(g)=2m+1$ if $k\notin A$.
One advantage of the symmetric normal form is that in forming products, cancellations
are usually reduced to free cancellations.

Our next goal is to introduce a preorder relation on~$G$. By Assumption~\ref{ass:A},
the group~$G$ is countable and we can fix an enumeration.
For $i\in\{1,2\}$, we obtain a strict total ordering $<_i$ on the set~$L_i$
defined by this enumeration. In particular, for every $h\in L_i$ there exist
only finitely many $h'\in L_i$ such that $h'<_i h$.

\begin{rem}[Ordering the Leading Halves]$\mathstrut$\label{rem:orderL}
\begin{enumerate}
\item[(a)] Let $\le_L$ be the ordering relation on $L_1 \cup L_2$ defined by
\begin{enumerate}
\item[(1)] $\ell \le_L \ell$ for $\ell\in L_1\cup L_2$,
\item[(2)] $\ell_1 \le_L \ell_2$ for $\ell_1\in L_1$ and $\ell_2\in L_2$, and
\item[(3)] $\ell \le_L \ell'$ if $\ell,\ell'\in L_i$ and $\ell <_i \ell'$.
\end{enumerate}

\item[(b)] Now we extend the ordering relation $\le_L$ to the leading halves 
$\ell_1\cdots \ell_m$ of symmetric normal forms as follows:
\begin{enumerate}
\item[(1)] $\ell_1\cdots \ell_m \le_L \ell_1 \cdots \ell_m$,
\item[(2)] $\ell_1 \cdots \ell_m <_L \ell'_1 \cdots \ell'_{m'}$ if $m<m'$,
\item[(3)] $\ell_1 \cdots \ell_m <_L \ell'_1 \cdots \ell'_m$
if $\ell_1=\ell'_1$, $\dots$, $\ell_{i-1}= \ell'_{i-1}$, $\ell_i <_L \ell'_i$
for some $i\in \{1,\dots,m\}$. 
\end{enumerate}
In other words, we order the leading halves by the \textit{length-lex\-i\-co\-graphic
ordering} induced by the ordering $\le_L$ on $L_1\cup L_2$.

\item[(c)] The ordering $\le_L$ on the leading halves has the following properties:
\begin{enumerate}
\item[(1)] If $\ell_1\cdots \ell_{k-1} <_L \ell'_1 \cdots \ell'_{k-1}$ and
$\ell_k,\ell'_k$ are in the correct sets $L_i\setminus \{1\}$, then
$\ell_1 \cdots \ell_k <_L \ell'_1 \cdots \ell'_k$. 

\item[(2)] For a leading half $\ell_1 \cdots \ell_k$, there exist only finitely
many leading halves $\ell_1\cdots \ell_{k-1} \ell_k'$ with
$\ell_1\cdots \ell_{k-1}\ell'_k <_L \ell_1 \cdots \ell_k$.
\end{enumerate}

\item[(d)] To define an ordering relation $\le_R$ on the rear halves, we let
$r_m \cdots r_1 \le_R r'_{m'} \cdots r'_1$ $\Longleftrightarrow$
$(r'_1)^{-1} \cdots (r'_{m'})^{-1} \le_L r_1^{-1} \cdots r_m^{-1}$. 

\end{enumerate}
\end{rem}

At this point we are ready to define the desired preorder relation on~$G$.

\begin{defi}\label{def:preorder}
Let $G = H_1 \astA H_2$ be an amalgamated product as above.
We define a preorder $\precc$ on sets $\{g,g^{-1}\}$ with $g\in G$ as follows.
\begin{enumerate}
\item[(a)] Let $g'$ be the element in $\{g,g^{-1}\}$ such that
the leading half of~$g'$ is less than or equal to the leading half of~$(g')^{-1}$
with respect to~$\le_L$.

\item[(b)] Now we let $\{g,g^{-1}\} \precc \{h,h^{-1}\}$ if
\begin{enumerate}
\item[(1)] $\lambda(g') < \lambda(h')$,

\item[(2)] $\lambda(g')=\lambda(h')$, and the leading halves $\ell_1\cdots \ell_m$
of~$g'$ as well as $\ell'_1 \cdots \ell'_{m'}$ of~$h'$ satisfy $\ell_1 \cdots \ell_m
<_L \ell'_1 \cdots \ell'_{m'}$, or

\item[(3)] $\lambda(g') = \lambda(h')$, the leading halves of~$g'$ and~$h'$
are equal, and the rear halves $r_m \cdots r_1$ of~$g'$ and $r'_{m'}\cdots r'_1$ of~$h'$
satisfy $r_m \cdots r_1 \le_L r'_{m'}\cdots r'_1$.
\end{enumerate}

To simplify the notation, we shall also write $g' \precc h'$
instead of $\{g,g^{-1}\} \precc \{h,h^{-1}\}$ if no confusion can occur.

\end{enumerate}
\end{defi}

Notice that $g' \precc h'$ and $h' \precc g'$
imply that~$g$ and~$h$ differ only in their kernel. Since this can
occur with $g\ne h$, the relation $\precc$ is only a preorder, \ie, reflexive
and transitive, but in general not antisymmetric.
Next, recall that a \textbf{regular Nielsen transformation} of a set $\{g_1,\dots,g_n\}$
of elements of a group is a finite product of \textbf{elementary Nielsen transformations},
\ie, of transformations of one of the following types:
\begin{enumerate}
\item[(1)] Replace one element $g_i$ by $g_i^{-1}$.

\item[(2)] Replace one element $g_i$ by $g_i g_j$, where $j\ne i$.
\end{enumerate}
Moreover, two sets are called \textbf{Nielsen equivalent} if one can be transformed into the
other by a regular Nielsen transformation. The goal of Nielsen reduction is to
shorten sets of elements in~$G$ in the following sense.

\begin{defi}
Let $\{g_1,\dots,g_n\}$ and $\{h_1,\dots,h_n\}$ be two sets of elements in~$G$.
\begin{enumerate}
\item[(a)] The set $\{g_1,\dots,g_n\}$ is called \textbf{shorter} than the set $\{h_1,\dots,h_n\}$
if $g'_i \precc h'_i$ for $i=1,\dots,n$ and if $h'_j \precc g'_j$
fails for at least one $j\in \{1,\dots,n\}$.

\item[(b)] The set $\{g_1,\dots,g_n\}$ is called \textbf{Nielsen reduced} or
\textbf{$\precc$-minimal} if it is not Nielsen equivalent to a set
$\{h_1,\dots,h_n\}$ which is shorter or satisfies $h_i=1$ for some $i\in \{1,\dots,n\}$.

\end{enumerate}
\end{defi}

Thus our goal is to transform a given finite set of elements of~$G$ to a Nielsen reduced set
and to study the resulting $\precc$-minimal sets. 
The following rules are based on the
length and the ordering of elements in~$G$. They were first shown in~\cite[\S 1]{Zie1}
and later refined in~\cite{Ros1}. Easily accessible proofs 
are contained in~\cite[Lemmas 1.5.21 and 1.5.22]{FMRSW}.

\begin{prop}
Let $x,y,z\in G \setminus \{1\}$ be elements such that the following conditions are satisfied:
\begin{enumerate}
\item[(1)] $\lambda(xy) \ge \max\{ \lambda(x), \lambda(y) \}$ and 
$\lambda(yz) \ge \max\{ \lambda(y), \lambda(z)\}$,

\item[(2)] $xy\not\precc x$ and $yz\not\precc z$.
\end{enumerate}
Then the following claims hold.
\begin{enumerate}
\item[(a)] If $\lambda(xyz) \le \lambda(x) - \lambda(y) + \lambda(z)$ then~$y$ is conjugate to
an element of $H_1\cup H_2$.

\item[(b)] If $\lambda(y) < \min\{ \lambda(x), \lambda(z) \}$ and 
$\lambda(xyz) < \lambda(x) - \lambda(y) + \lambda(z)$ then~$y$ is conjugate to an element of~$A$.

\item[(c)] Let $\{g_1,\dots,g_n\}$ be Nielsen reduced, and let $x,y,z \in \{g_1,\dots,g_n,
g_1^{-1},\allowbreak \dots,g_n^{-1}\}$. If $\lambda(xyz) < \lambda(x) - \lambda(y) + \lambda(z)$ then~$y$
is conjugate to an element of~$A$, or we have $x=y=z$.
\end{enumerate}
\end{prop}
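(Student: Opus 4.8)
The plan is to reason entirely inside the symmetric normal form and to read off, from the length hypotheses, how much of~$y$ can survive the two-sided cancellation taking place in the product~$xyz$. I would write $y = \ell_1 \cdots \ell_m\, k\, r_m \cdots r_1$, so that $\lambda(y)=2m$ when $k\in A$ and $\lambda(y)=2m+1$ otherwise, and track separately how the leading half of~$y$ cancels against the rear half of~$x$ and how the rear half of~$y$ cancels against the leading half of~$z$. The bound $\lambda(xy)\ge\lambda(x)$ in hypothesis~(1) limits the first cancellation to at most the leading half $\ell_1\cdots\ell_m$, and symmetrically $\lambda(yz)\ge\lambda(z)$ limits the second to the rear half; this is the step where the advantage of the symmetric normal form matters, since it keeps the amalgamated cancellations (in which a boundary syllable only partially cancels and leaves an element of~$A$ behind) under explicit control. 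Hypothesis~(2) is the genuinely refined input: an amalgamated cancellation can leave $\lambda(xy)$ equal to $\lambda(x)$ while the rear-half comparison nonetheless ranks $xy$ strictly below~$x$, and insisting that $xy\not\precc x$ and $yz\not\precc z$ is precisely what forbids the skew absorptions in which~$y$ would collapse onto a cyclically reduced word of length two.

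With this machinery set up, part~(a) follows readily. The inequality $\lambda(xyz)\le\lambda(x)-\lambda(y)+\lambda(z)$ forces the whole of~$y$ to be consumed, leaving no surviving central syllable; since hypothesis~(1) makes the amounts cancelled on the two sides equal and hypothesis~(2) rules out the skew collapses, the only possibility left is that~$y$ is cyclically reduced to length at most one. Equivalently, the leading and rear halves of~$y$ become mutually inverse and~$y$ is conjugate to its central element, which lies in $H_1\cup H_2$.

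For part~(b) I would feed the stronger hypotheses back into the same picture. The condition $\lambda(y)<\min\{\lambda(x),\lambda(z)\}$ guarantees that genuine syllables of~$x$ and of~$z$ flank the central part of~$y$, and the strict inequality $\lambda(xyz)<\lambda(x)-\lambda(y)+\lambda(z)$ then forces these flanking syllables to cancel further across the position that~$y$ has vacated. This transverse cancellation can only be transmitted through the centre if the element sitting there is an amalgamated one that may be shifted between~$H_1$ and~$H_2$, i.e.\ if the relevant central element lies in~$A$; an element of $H_i\setminus A$ would remain as a fixed syllable and block it. Hence~$y$ is conjugate to an element of~$A$.

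Part~(c) is the most delicate point, and I expect the reduction to the previous cases to be the main obstacle. When $\{g_1,\dots,g_n\}$ is Nielsen reduced, hypotheses~(1) and~(2) hold for any $x,y,z$ drawn from the symmetrized set $\{g_1,\dots,g_n,g_1^{-1},\dots,g_n^{-1}\}$, for otherwise an elementary transformation such as replacing~$x$ by~$xy$ would yield a strictly $\precc$-shorter equivalent set; the coincidences $x=y^{-1}$ and $y=z^{-1}$ are excluded since they reduce the hypothesis to $\lambda(z)<\lambda(z)$ respectively $\lambda(x)<\lambda(x)$, and the fully degenerate triple $x=y=z$ is set aside as the stated exception. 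One would then like to invoke part~(b), but its hypothesis $\lambda(y)<\min\{\lambda(x),\lambda(z)\}$ is not automatic, so the remaining work is to treat the cases $\lambda(y)\ge\lambda(x)$ or $\lambda(y)\ge\lambda(z)$ directly, re-running the junction analysis to show that the refined ordering $\precc$ --- and not merely the length --- still drives the central element of~$y$ into~$A$. Throughout, the hardest bookkeeping is keeping track of how amalgamated elements of~$A$ migrate across syllable boundaries during reduction, because it is exactly this migration that separates the conclusion that~$y$ is conjugate into~$A$ from the weaker statement that it is conjugate into $H_1\cup H_2$.
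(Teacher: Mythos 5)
Your overall framework---symmetric normal forms, hypothesis~(1) confining the cancellation in $xy$ and $yz$ to at most half of~$y$, hypothesis~(2) controlling the equal-length cases---is the right one, and it is the approach of the sources the paper itself defers to (the paper gives no proof of this proposition, citing \cite[\S 1]{Zie1}, \cite{Ros1}, and \cite[Lemmas 1.5.21 and 1.5.22]{FMRSW}). But there are two genuine gaps. The first is in part~(b), where your mechanism is wrong: a kernel $k\in H_i\setminus A$ does \emph{not} ``remain as a fixed syllable and block'' the transverse cancellation. If $y=c\,k\,c^{-1}$ with $k\in H_i\setminus A$ and both halves of~$y$ cancel fully, write $x=x_0c^{-1}$ and $z=cz_0$ in reduced form; then $xyz=x_0\,k\,z_0$, the last syllable~$h$ of~$x_0$ and the first syllable~$h'$ of~$z_0$ both lie in~$H_i$, and the three syllables $h,k,h'$ merge into the single syllable $hkh'$. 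Counting syllables shows that this merge lands exactly on the boundary value $\lambda(x)-\lambda(y)+\lambda(z)$ of part~(a); nothing is blocked. The strict inequality of~(b) occurs precisely when $hkh'\in A$, and in that configuration $k=h^{-1}ah'^{-1}$ with $a\in A$ need \emph{not} be conjugate into~$A$. Ruling this configuration out is the actual content of~(b), and it is exactly here that the preorder conditions $xy\not\precc x$, $yz\not\precc z$ and the hypothesis $\lambda(y)<\min\{\lambda(x),\lambda(z)\}$ must be deployed in a case analysis on coset representatives that your sketch never carries out.

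The second gap is in part~(c), and you concede it yourself: the cases $\lambda(y)\ge\lambda(x)$ or $\lambda(y)\ge\lambda(z)$ are left as ``remaining work,'' so precisely the statement the rest of the paper relies on is unproved. Moreover, your derivation of hypotheses~(1) and~(2) from Nielsen reducedness is incomplete: replacing $g_i$ by $g_ig_j$ is an elementary Nielsen transformation only for $j\ne i$, so for repeated entries such as $x=y\ne z$, $y=z\ne x$, or $x=z$ the shortening argument you invoke is unavailable---yet you set aside only the fully degenerate triple $x=y=z$. These partially degenerate cases genuinely occur and cannot be waved away; note the paper's remark immediately after the proposition that $\lambda(x^n)<\lambda(x)$ for some $n\ge 2$ forces~$x$ to be conjugate into $H_1\cup H_2$ with $x^n$ conjugate into~$A$, which is exactly the phenomenon your argument would have to confront when $x=y$. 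In short: correct architecture and a correct reading of what hypotheses~(1) and~(2) are for, but the central step of~(b) rests on a false mechanism, and~(c) is an outline rather than a proof.
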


In view of the second case in~(c) we note that $\lambda(x^n) < \lambda(x)$ for some $n\ge 2$
implies that~$x$ is conjugate to an element in $H_1\cup H_2$ and $x^n$ is conjugate 
to an element of~$A$.

\begin{prop}
Let $x,z\in G$ be elements such that $\lambda(x)\ge 1$ and $\lambda(z)\ge 1$, and let 
$y\in A \setminus \{1\}$ be such that the following conditions are satisfied:
\begin{enumerate}
\item[(1)] $\lambda(xy) \ge \lambda(x)$ and $\lambda(yz) \ge \lambda(z)$,

\item[(2)] $xy \not \precc x$ and $yz \not\precc z$,

\item[(3)] $\lambda(xyz) < \lambda(x) - \lambda(y) + \lambda(z) - 1 = \lambda(x) +\lambda(z)-1$.
\end{enumerate}
Then there exists an element $h\in (H_1\cup H_2) \setminus A$ such that $hyh^{-1} \in A$.
\end{prop}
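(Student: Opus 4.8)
The plan is to locate a single junction inside $xyz$ that is forced to collapse into $A$, and then to use the two order conditions in~(2) to promote this collapse into the required conjugacy relation. Since $y\in A\setminus\{1\}$ has its kernel in $A$, we have $\lambda(y)=0$, so (3) reads $\lambda(xyz)\le\lambda(x)+\lambda(z)-2$. Writing the reduced forms $x=h_1\cdots h_s\,a_x$ and $z=h'_1\cdots h'_t\,a_z$ with $s=\lambda(x)\ge1$, $t=\lambda(z)\ge1$, I set $P:=h_s a_x$, the last syllable of $x$ (in some factor $H_\nu$), and $Q:=h'_1$, the first syllable of $z$ (in some factor $H_\mu$); both lie outside $A$.

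First I would reduce the concatenation $h_1\cdots h_s\,(a_x y)\,h'_1\cdots h'_t\,a_z$ from the middle outward. If $\nu\ne\mu$ no cancellation occurs and $\lambda(xyz)=\lambda(x)+\lambda(z)$; if $\nu=\mu$ but $PyQ\notin A$ the three innermost factors merge into one syllable and $\lambda(xyz)=\lambda(x)+\lambda(z)-1$. If, however, $PyQ\in A$, then $h_{s-1}$ and $h'_2$ become adjacent within the same factor and merge as well, so $\lambda(x)+\lambda(z)-\lambda(xyz)\ge3$. Hence the admissible deficits are $0$, $1$ or $\ge3$, and (3) forces
\[
P\,y\,Q \;=\; h_s\,a_x\,y\,h'_1 \;\in\; A, \qquad \mu=\nu .
\]

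This collapse by itself is not enough—it occurs even when $A$ is malnormal in $H_\nu$, in which case no conjugator exists—so the decisive input is~(2). Because right multiplication of $x$ by $y$ and left multiplication of $z$ by $y$ fix the positive leading halves, the content of (2) resides in the inverses. Let $\alpha,\beta\in L_\nu$ be the representatives of $a_x^{-1}h_s^{-1}A$ and $y^{-1}a_x^{-1}h_s^{-1}A$; these are precisely the leading coset representatives of $x^{-1}$ and $(xy)^{-1}$. Substituting $Q=y^{-1}a_x^{-1}h_s^{-1}c$ with $c=PyQ\in A$ into $QA$ and $yQA$, I would check that the leading coset representatives of $z$ and of $yz$ are again $\beta$ and $\alpha$. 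Thus both comparisons in~(2) are controlled by the single pair $\{\alpha,\beta\}$, and unwinding the definition of $\precc$ shows that $xy\not\precc x$ demands $\alpha<_L\beta$ whereas $yz\not\precc z$ demands $\beta<_L\alpha$. These are incompatible, so necessarily $\alpha=\beta$.

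Finally, $\alpha=\beta$ says $a_x^{-1}h_s^{-1}A=y^{-1}a_x^{-1}h_s^{-1}A$, which rearranges to $P\,y\,P^{-1}=h_s\,y\,h_s^{-1}\in A$; as $h_s\in L_\nu\setminus\{1\}$, the element $h:=P=h_s a_x$ lies in $H_\nu\setminus A\subseteq(H_1\cup H_2)\setminus A$ and satisfies $hyh^{-1}\in A$, which is the claim. I expect the main obstacle to be the third step. Since $\precc$ compares the pairs $\{g,g^{-1}\}$ via whichever of $g,g^{-1}$ has the smaller leading half, one must first settle which representative is selected for each of $x,xy,z,yz$; this splits into cases according to the index $\nu$, the parities of $\lambda(x),\lambda(z)$, and the outcomes of the leading-half comparisons. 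In the cases where the selected representatives do not lead with the factor-$\nu$ boundary syllables, the same conclusion must be reached through the rear halves instead, with the conjugator furnished by the appropriate boundary syllable; carrying out this bookkeeping uniformly—and correctly propagating the tail $a_x$ through the normal form when passing from $x$ to $xy$—is where the real work lies.
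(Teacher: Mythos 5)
The paper does not actually prove this proposition: it is recalled from the literature, with proofs cited in \cite[\S 1]{Zie1}, \cite{Ros1} and \cite[Lemma 1.5.22]{FMRSW}, so your attempt must be measured against that classical argument. Your first half is sound and matches it: since $y\in A\setminus\{1\}$ one has $\lambda(y)=0$ (and note that condition~(1) is then automatic, since multiplying by an amalgam element never changes $\lambda$, so its absence from your argument is harmless), and condition~(3) does force the junction collapse $PyQ\in A$ with $\mu=\nu$. Two small slips there: your trichotomy of deficits ``$0$, $1$ or $\ge 3$'' fails when $\lambda(x)=1$ or $\lambda(z)=1$, where a deficit of exactly $2$ occurs --- though the implication you actually use, deficit $\ge 2$ implies $PyQ\in A$, survives --- and the simplification $PyP^{-1}=h_s\,y\,h_s^{-1}$ silently uses that $a_x$ and $y$ commute, which is fine here only because $A$ is cyclic.

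The genuine gap is your third step, and it is not mere bookkeeping. You assert that $xy\not\precc x$ demands $\alpha<_L\beta$ while $yz\not\precc z$ demands $\beta<_L\alpha$, but this alignment holds only for one pattern of representative selection. Since $\precc$ compares $\{g,g^{-1}\}$ through whichever element has the $\le_L$-smaller leading half, the inequality extracted from~(2) flips direction according to whether $x$ or $x^{-1}$ (resp.\ $z$ or $z^{-1}$) is selected: when $x^{-1}$ and $(xy)^{-1}$ are selected, the comparison is decided at the first syllables $\alpha,\beta$ of $\ell(x^{-1})$ and $\ell((xy)^{-1})$; when $x$ and $xy$ are selected (their leading halves coincide), it is relegated to the rear halves, and there the orientation depends on the convention for comparing rear halves --- Remark~\ref{rem:orderL}(d) defines $\le_R$ antitonely via inverses, while Definition~\ref{def:preorder}(b)(3) writes $\le_L$, an ambiguity you never resolve. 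Under one resolution, cross-combinations exist (e.g.\ $x^{-1},(xy)^{-1}$ selected on the $x$-side while $z^{-1},(yz)^{-1}$ are selected on the $z$-side) in which both halves of~(2) yield the \emph{same} strict inequality between $\alpha$ and $\beta$; then no contradiction appears and $\alpha=\beta$ cannot be concluded at this level. One must instead check every selection pattern, including ties $\ell(x)=\ell(x^{-1})$, and show in each case either the contradiction or that the configuration itself forces the needed inequality structurally (for instance, $\ell(x^{-1})<_L\ell(x)\le_L\ell((xy)^{-1})$ already forces $\alpha<_L\beta$ regardless of~(2)). That verification is exactly the content of \cite[Lemma 1.5.22]{FMRSW} and the refinements in \cite{KR}; your closing sentence defers it as ``where the real work lies,'' which means the proposal is a correct strategy sketch --- consistent with the cited proof --- but not a proof.
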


Using these rules, the following result was shown by H.~Zieschang in~\cite[Satz 1 and Korollar 1]{Zie1} and
refined by the third author in~\cite{Ros1,Ros2}.

\begin{thm}[The Nielsen Method for Amalgamated Free Products]$\mathstrut$\label{thm:3cases}\\
Let $G=H_1 \astA H_2$ be an amalgamated free product which satisfies Assumption~\ref{ass:A}.
Given a finite set of elements $\{ g_1,\dots,g_n\}$ in~$G$, there exists a 
finite sequence of shortening elementary Nielsen transformations which result 
in a set $\{x_1,\dots,x_n\}$ such that one of the following cases occurs.
\begin{enumerate}
\item[(1)] $x_i=1$ for some $i\in \{1,\dots,n\}$.

\item[(2)] Every $y \in \langle x_1,\dots,x_n\rangle$ has a
representation $y = x_{i_1}^{\epsilon_1} \cdots x_{i_q}^{\epsilon_q}$
with $\epsilon_j \in \{1,-1\}$, with $\epsilon_j = \epsilon_{j+1}$
in case $i_j = i_{j+1}$, and with $\lambda(x_{i_j}) \le \lambda(y)$ for $j=1,\dots,q$.

\item[(3)] For some $p\ge 1$, there is a subset $\{x_{i_1},\dots,x_{i_p} \}$
of $\{x_1,\dots,x_n\}$ which is contained in a subgroup~$H$ of~$G$ that is conjugate to~$H_1$
or~$H_2$, and such that $x_{i_1}\cdots x_{i_p}$ is conjugate to an element of $A\setminus \{1\}$.
\end{enumerate}
\end{thm}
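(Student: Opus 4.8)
The plan is to use the two preceding propositions as the engine of a classical Nielsen-reduction argument, iterating the shortening process until it terminates and then reading off which of the three cases we land in. First I would observe that the preorder $\precc$ is well-founded on the finitely many relevant sets: by Remark~\ref{rem:orderL}(c)(2), below any fixed leading half there are only finitely many smaller ones, and the length $\lambda$ takes values in $\mathbb{N}$, so any strictly decreasing chain of $\precc$-classes is finite. Hence starting from $\{g_1,\dots,g_n\}$ and repeatedly applying a shortening elementary Nielsen transformation whenever one exists, the process must stop after finitely many steps, yielding a set $\{x_1,\dots,x_n\}$ that is either degenerate (some $x_i=1$, giving case~(1)) or Nielsen reduced (i.e.\ $\precc$-minimal).

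Assuming we are in the $\precc$-minimal situation with all $x_i\neq 1$, the task is to show that either case~(2) or case~(3) holds. I would set $S=\{x_1,\dots,x_n,x_1^{-1},\dots,x_n^{-1}\}$ and analyze products of elements of~$S$ using Proposition (the one labelled with the three claims (a)--(c)). The key dichotomy is whether there exist $x,y,z\in S$ with $\lambda(xyz)<\lambda(x)-\lambda(y)+\lambda(z)$ and not $x=y=z$. If no such triple exists, then by the length-additivity bookkeeping every product $x_{i_1}^{\epsilon_1}\cdots x_{i_q}^{\epsilon_q}$ (in reduced syllable form, with the coincidence condition $\epsilon_j=\epsilon_{j+1}$ when $i_j=i_{j+1}$ forced by $\precc$-minimality to avoid the trivial cancellation $x_ix_i^{-1}$) has length at least $\max_j\lambda(x_{i_j})$, which delivers exactly the normal-form statement and the inequality $\lambda(x_{i_j})\le\lambda(y)$ of case~(2). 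If on the other hand such a triple exists, Proposition~(c) forces~$y$ to be conjugate to an element of $A\setminus\{1\}$; I would then use the malnormality hypothesis in Assumption~\ref{ass:A}(2) together with the refined Proposition (on $y\in A$) to locate a conjugate of a nontrivial subset of the generators inside a single conjugate of~$H_1$ or~$H_2$, and to show the corresponding product is conjugate into~$A$, which is precisely case~(3).

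The main obstacle I expect is the careful combinatorial bookkeeping that turns ``no cancelling triple exists'' into the clean global statement of case~(2): one must verify that local control of length over triples $xyz$ propagates to arbitrary-length products, and that $\precc$-minimality rules out all the ways a long product could collapse (not only free cancellation $x_ix_i^{-1}$ but also kernel-level cancellations that shorten length by amalgamation across~$A$). This is where the two technical propositions do the real work: claim~(b) handles the case where a short middle factor $y$ gets absorbed, and the remark following the first proposition ($\lambda(x^n)<\lambda(x)$ forcing $x$ conjugate into $H_1\cup H_2$) must be invoked to exclude pathological self-overlaps. The second subtlety is ensuring that the reduction terminates with a \emph{generating} set for $\langle x_1,\dots,x_n\rangle$ that is genuinely Nielsen equivalent to the original, so that conclusions about $\{x_1,\dots,x_n\}$ transfer back to the subgroup generated by $\{g_1,\dots,g_n\}$; this is immediate since elementary Nielsen transformations preserve the generated subgroup, but it should be stated explicitly. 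Once these points are settled, the trichotomy of cases~(1)--(3) is exhaustive, completing the proof.
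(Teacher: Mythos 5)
Your proposal is correct in outline and takes essentially the same route as the paper, which in fact gives no in-text proof but attributes the result to Zieschang~\cite[Satz 1 and Korollar 1]{Zie1} as refined in~\cite{Ros1,Ros2}: termination of shortening Nielsen transformations via well-foundedness of the preorder $\precc$, followed by the trichotomy for $\precc$-minimal systems extracted from the two cancellation propositions, with malnormality of~$A$ (Assumption~\ref{ass:A}) disposing of the residual conjugation phenomena. The combinatorial induction you flag as the main obstacle --- propagating the triple-wise length control to arbitrary products --- is exactly the content of the cited proofs, so nothing in your plan diverges from the paper's approach.
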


Clearly, case~(1) implies that the rank of $\langle g_1,\dots,g_n \rangle$ is less than~$n$.
If $n\ge 2$ and case~(2) occurs then $\langle g_1,\dots,g_n\rangle$ is a non-trivial free product. 
Let us also interpret case~(3) in greater detail.

\begin{rem}\label{rem:case3}
Suppose that in the preceding theorem case~(3) occurs.
\begin{enumerate}
\item[(a)] If $G= \langle g_1,\dots,g_n\rangle$ then $p\ge 2$, since in this
case conjugations determine a Nielsen transformation.

\item[(b)] There exist $p\ge 2$ elements 
$\{x_{i_1}, \dots, x_{i_p}\}$ contained in~$H_1$ or~$H_2$ such that at least one of them
is not in~$A$ and $x_{i_1} \cdots x_{i_p} \in A \setminus \{1\}$.

\end{enumerate}
\end{rem}

Finally, we recall some refinements of the preceding material which were shown
in~\cite{KR} and will be useful later. For an element $g\in G$ with symmetric normal
form $g = \ell_1\cdots\ell_m\, k \, r_m \cdots r_1$ as in Proposition~\ref{prop:symmetric},
we write $\ell(g) = \ell_1\cdots \ell_m$ for the leading half, $k(g)=k$ for its kernel, and
$r(g)=r_m\cdots r_1$ for its rear half. Thus we have $g=\ell(g)\cdot k(g) \cdot r(g)$.

\begin{prop}\label{prop:refine}
Let $x,y\in G$ such that $r(x)=\ell(x)^{-1}$, such that $k(x)\in H_i \setminus A$
for some $i\in \{1,2\}$, and such that $\lambda(y) \le \lambda(x)$.
\begin{enumerate}
\item[(a)] For $\epsilon \in \{1,-1\}$ assume that $\lambda(xy^\epsilon) < \lambda(x)$ 
or $\lambda(y^{-\epsilon}x) < \lambda(x)$. Then one of the following cases occurs.
\begin{enumerate}
\item[(1)] $\lambda(y^{-\epsilon} x y^\epsilon) < \lambda(x)$;

\item[(2)] $r(y) = \ell(y)^{-1} = \ell(x)^{-1}$, and $k(x)k(y)^\epsilon$
or $k(y)^{-\epsilon} k(x)$ is contained in~$A$;

\item[(3)] $r(y) \ne \ell(y)^{-1}$, $k(y) \in H_i\setminus A$ for some $i\in \{1,2\}$,
$\lambda(x) = \lambda(y) = \lambda(y^{-\epsilon}x y^\epsilon)$, and~$A$ contains
$k(x)k(y)^\epsilon$ or $k(y)^{-\epsilon}k(x)$. In particular, we have $\min \{ \lambda(xy^\epsilon),
\lambda(y^{-\epsilon}x)\} < \lambda(y)$.

\end{enumerate}

\item[(b)] For $\epsilon \in \{1,-1\}$, assume that $\lambda(xy^\epsilon) = \lambda(x)$ 
or $\lambda(y^{-\epsilon}x) = \lambda(x)$. Then we have $\lambda(y^{-\epsilon}xy^\epsilon)\le 
\lambda(x)$.

\end{enumerate}
\end{prop}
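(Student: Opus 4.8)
The plan is to reduce the statement to a single canonical situation by exploiting the symmetries of the problem, and then to carry out a careful analysis of the cancellation occurring in the central block of the relevant products, organized by how deeply the leading half of $y^{\pm 1}$ matches the leading half of $x$.

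First I would record the behaviour of the symmetric normal form under inversion: if $y = \ell(y)\,k(y)\,r(y)$, then $y^{-1}$ has leading half $r(y)^{-1}$, kernel $k(y)^{-1}$, rear half $\ell(y)^{-1}$, and in particular $\lambda(y^{-1}) = \lambda(y)$. Using the hypothesis $r(x) = \ell(x)^{-1}$ I write $x = \ell(x)\,k(x)\,\ell(x)^{-1}$, so that $\lambda(x) = 2m+1$ where $m$ is the syllable length of $\ell(x)$, and $x^{-1} = \ell(x)\,k(x)^{-1}\,\ell(x)^{-1}$ is again of the same symmetric shape with kernel in $H_i\setminus A$. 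Since the hypotheses and all three conclusions of part~(a) are invariant under simultaneously replacing $(y,\epsilon)$ by $(y^{-1},-\epsilon)$, and since $\lambda(y^{-\epsilon}x) = \lambda(x^{-1}y^\epsilon)$ lets me trade the left-multiplication hypothesis on $x$ for the right-multiplication hypothesis on $x^{-1}$, I may assume throughout that $\epsilon = 1$ and $\lambda(xy) < \lambda(x)$.

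Next I would expand $xy = \ell(x)\,k(x)\,\ell(x)^{-1}\cdot \ell(y)\,k(y)\,r(y)$ and isolate the central block $\ell(x)^{-1}\ell(y)$, where cancellation takes place; everything hinges on the depth $t$ to which the syllables of $\ell(y) = \ell'_1\cdots\ell'_{m'}$ agree with those of $\ell(x) = \ell_1\cdots\ell_m$. Reducing $\ell(x)^{-1}\ell(y)$ syllable by syllable, I would distinguish the free cancellation of matching syllables from the delicate event in which a partial product $\ell_{t+1}^{-1}\ell'_{t+1}$ falls into $A\setminus\{1\}$, which allows the amalgamation to propagate the cancellation one further step into the kernels. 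The constraint $k(x)\in H_i\setminus A$ is the crucial rigidity: it guarantees that the central kernel cannot be absorbed into $A$ unless the matching is complete, i.e. unless $\ell(y) = \ell(x)$, and it is here that malnormality of $A$ is used to exclude spurious coincidences (for instance that $\ell_m\,a\,\ell_m^{-1}$ with $a\in A\setminus\{1\}$ cannot re-enter $A$). I expect this bookkeeping to be the main obstacle, since several subcases according to whether $k(y)\in A$, whether $k(y)$ lies in the same factor as $k(x)$, and the parity of the residual length must all be checked to see precisely how far the length of $xy$, and then of the two-sided conjugate $y^{-1}xy$, can drop.

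Having quantified the cancellation I would read off the trichotomy. If the matching is incomplete, or complete but the kernels do not combine into $A$, then $y^{-1}xy$ retains enough surviving syllables on both sides to force $\lambda(y^{-1}xy) < \lambda(x)$, which is case~(1). If instead $\ell(y) = \ell(x)$, the behaviour splits according to the rear half of $y$: when $r(y) = \ell(y)^{-1} = \ell(x)^{-1}$, the element $y$ is symmetric with the same leading half as $x$, the conjugate collapses to $\ell(x)\,k(y)^{-1}k(x)k(y)\,\ell(x)^{-1}$, and $\lambda(xy) < \lambda(x)$ forces $k(x)k(y)\in A$ or $k(y)^{-1}k(x)\in A$, which is case~(2); when $r(y)\neq\ell(y)^{-1}$ the same kernel relation is forced while $\lambda(x) = \lambda(y)$, and the asymmetry of $y$ yields case~(3), the final bound $\min\{\lambda(xy),\lambda(y^{-1}x)\} < \lambda(y)$ coming from the identical cancellation count applied to $y$ rather than to $x$. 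Finally, for part~(b) the equality $\lambda(xy^\epsilon) = \lambda(x)$ pins down a balanced cancellation in which the leading half of $y^\epsilon$ cancels exactly against the rear half $\ell(x)^{-1}$ of $x$ with no net change of length; since $x$ is symmetric and $\lambda(y^{-\epsilon}) = \lambda(y^\epsilon)$, the mirror-image cancellation occurs on the left when forming $y^{-\epsilon}(xy^\epsilon)$, so no length is gained in the second multiplication and $\lambda(y^{-\epsilon}xy^\epsilon)\le\lambda(x)$ follows. This case is shorter than part~(a) because one needs only the non-increase of length rather than an exact determination of the surviving syllables.
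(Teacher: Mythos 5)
The paper does not actually prove this proposition: its ``proof'' is the citation ``Claim~(a) is shown in \cite[Lemma 2.4]{KR} and claim~(b) in \cite[Lemma 2.5]{KR}.'' So there is no in-paper argument to match, and your plan---reduce by the inversion symmetries to $\epsilon=1$ and $\lambda(xy)<\lambda(x)$, then analyze the cancellation depth of $\ell(y)$ against $r(x)=\ell(x)^{-1}$ in symmetric normal forms---is indeed the correct skeleton, and is the kind of argument carried out in \cite{KR}. Your symmetry reductions are legitimate: replacing $(y,\epsilon)$ by $(y^{-1},-\epsilon)$ fixes hypotheses and conclusions, and passing from $\lambda(y^{-\epsilon}x)<\lambda(x)$ to a right-multiplication hypothesis on $x^{-1}$ preserves the disjunction in cases (2) and (3) because $k(y)^{-\epsilon}k(x)=\bigl(k(x)^{-1}k(y)^{\epsilon}\bigr)^{-1}$.

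The gap is that the entire content of the lemma is the case analysis you defer (``I expect this bookkeeping to be the main obstacle\dots must all be checked''): what you have is a plan, not a proof, and the mechanics you do describe are off in ways indicating the counting was never performed. First, since the $\ell_i,\ell'_i$ are \emph{left coset representatives}, $\ell_{t+1}^{-1}\ell'_{t+1}\in A$ forces $\ell_{t+1}=\ell'_{t+1}$, so inside the two matched halves no element of $A\setminus\{1\}$ ever appears; your ``delicate event'' is vacuous there, and the genuine amalgamated propagation begins only at a kernel---namely when $k(y)\in A$, or when $k(y)\in \ell_{m'+1}A$, in which case cancellation sweeps through $k(y)$ into $r(y)$. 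These are exactly the configurations that must be shown to land in case~(1) (e.g.\ $\ell(y)$ a proper prefix of $\ell(x)$ with $k(y)\in A$, where one finds $y^{-1}xy$ collapses almost completely), and your trichotomy read-off never treats them. Second, your branch ``matching incomplete $\Rightarrow$ case (1)'' cannot occur: if the depth $t$ satisfies $t<m'$, then $\lambda(xy)\ge \lambda(x)+\lambda(y)-2t-1\ge\lambda(x)+1$, contradicting the standing hypothesis; thus the hypothesis already forces the full leading half of $y$ to cancel, and the real dichotomy is $m'<m$ (kernel propagation, case~(1)) versus $\ell(y)=\ell(x)$ (cases (1)--(3) according to $k(y)$ and $r(y)$). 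Third, in part~(b) the ``mirror'' argument tacitly assumes that $\lambda(xy^{\epsilon})=\lambda(x)$ leaves the leading half of $xy^{\epsilon}$ equal to $\ell(x)$ and again ignores the kernel-propagation cases. A smaller symptom pointing the same way: you lean on malnormality of $A$, but the lemmas of \cite{KR} hold in arbitrary amalgams; nothing in a completed version of this cancellation count should need it.
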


\begin{proof}
Claim~(a) is shown in~\cite[Lemma 2.4]{KR} and claim~(b) in~\cite[Lemma 2.5]{KR}.
\end{proof}

%%%%%%%%%%%%%%%%%%%%%%%%%%%%%%%%%%%%%%%%%%%%%%%%%%%%%%%%%%%%%%%%%%%%%%%%%%%%%%%
%
% Section 3: Low Rank Subgroups of Amalgamated Free Products
%
%%%%%%%%%%%%%%%%%%%%%%%%%%%%%%%%%%%%%%%%%%%%%%%%%%%%%%%%%%%%%%%%%%%%%%%%%%%%%%%

\section{Low Rank Subgroups of Amalgamated Free Products}\label {sec3}

Let $n\ge 1$. Recall that a group~$G$ is called \textbf{$n$-generated} if it can be generated by~$n$ 
of its elements, and that the \textbf{rank} of~$G$, denoted by $\rk(G)$, 
is the least number~$n$ such that~$G$ is $n$-generated. The following notion
will play an important role in this paper.

\begin{defi}
For $n\ge 1$, a non-trivial group~$G$ is called an \textbf{$n$-free product of cyclics}
if every $n$-generated subgroup of~$G$ is a free product of at most~$n$ cyclic groups.
\end{defi}

Notice that this notion is completely different from the concept of an ``$n$-free group'' which
usually means that every $n$-generated subgroup is free (see~\cite{Hig}).

In the following we examine subgroups of an amalgamated free product
$G = H_1 \astA H_2$ which satisfies Assumption~\ref{ass:A}. In particular, let $H_1,H_2$
be finitely generated groups, and let $A = H_1\cap H_2$ be an infinite cyclic group which
is malnormal in~$G$.
In this setting, 2-generated subgroups of~$G$ can be characterized explicitly.
Note that the following result was first shown in~\cite[Thm.~6]{KS2} 
and is included here for completeness sake.

\begin{prop}[2-Generated Subgroups of Cyclically Amalgamated Free 
Products]$\mathstrut$\label{prop:2-gen}\\
Let $G=H_1 \astA H_2$ be an amalgamated free product as above, and assume that $\rk(H_1) \ge 2$
as well as $\rk(H_2) \ge 2$. If $H_1$ and~$H_2$ are 2-free products of cyclics, then~$G$
is a 2-free product of cyclics.

In particular, every 2-generated subgroup of~$G$ is a free product of $\le 2$ cyclics in this case.
\end{prop}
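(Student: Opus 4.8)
The plan is to take an arbitrary $2$-generated subgroup $U = \langle g_1, g_2 \rangle$ of $G$ and apply the Nielsen method (Theorem~\ref{thm:3cases}) to the generating set $\{g_1, g_2\}$. This produces a Nielsen reduced set $\{x_1, x_2\}$ that is Nielsen equivalent to $\{g_1, g_2\}$, so $U = \langle x_1, x_2 \rangle$, and exactly one of the three cases of the theorem occurs. The strategy is to show that each case leads to $U$ being a free product of at most two cyclics. I would handle the three cases in turn, reducing each to the hypothesis that $H_1$ and $H_2$ are $2$-free products of cyclics and that $A$ is malnormal.

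If case~(1) occurs, then $x_i = 1$ for some $i$, so $U$ is cyclic (generated by the remaining element), hence a free product of $\le 1 < 2$ cyclics and we are done. If case~(2) occurs with both $x_i \ne 1$, then every element of $U$ has a reduced expression as a product of the $x_i^{\pm 1}$ with the syllable-length condition $\lambda(x_{i_j}) \le \lambda(y)$; this is precisely the defining property of a free product $\langle x_1 \rangle \ast \langle x_2 \rangle$, so I would argue that no nontrivial relator can exist among $x_1, x_2$ and conclude $U \cong \langle x_1 \rangle \ast \langle x_2 \rangle$, a free product of two cyclics. The remaining and decisive case is case~(3).

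In case~(3), a subset of $\{x_1, x_2\}$ lies in a conjugate $H$ of $H_1$ or $H_2$, and its product is conjugate to a nontrivial element of $A$. By Remark~\ref{rem:case3}(b), I would extract $p \ge 2$ elements of $H_1$ or $H_2$, not all in $A$, whose product lies in $A \setminus \{1\}$. Since here $p \le 2$, this forces $p = 2$ and both $x_1, x_2$ (up to conjugation) to lie in a single conjugate of $H_1$ or $H_2$. After replacing $U$ by a conjugate, I may assume $U \le H_i$ for some $i \in \{1,2\}$. Now the hypothesis that $H_i$ is a $2$-free product of cyclics applies directly: $U$ is a $2$-generated subgroup of $H_i$, hence a free product of $\le 2$ cyclics, as required.

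The main obstacle I anticipate is case~(3), specifically verifying that the two generators can simultaneously be conjugated into a single factor $H_i$ rather than merely individually into (possibly different) conjugates of factors. I would need to use malnormality of $A$ in $G$ in an essential way: malnormality controls how the conjugating elements interact and forces the common conjugate. The condition in Assumption~\ref{ass:A} that $uv$ involves all generators of $G$ rules out degenerate splittings and ensures the factors behave rigidly enough for this argument. Once $U$ is shown to sit inside one conjugated factor, invoking the $2$-free-product-of-cyclics hypothesis on $H_i$ closes the case cleanly, and the final \emph{In particular} clause follows immediately from the definition of a $2$-free product of cyclics.
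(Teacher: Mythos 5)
Your proposal is correct and follows essentially the same route as the paper, whose proof is exactly this argument in compressed form: apply Theorem~\ref{thm:3cases}, note that cases~(1) and~(2) yield a free product of $\le 2$ cyclics, and in case~(3) use Remark~\ref{rem:case3}(b) to force $p=2$, placing $U$ inside a single conjugate of $H_1$ or $H_2$, where the hypothesis applies. The obstacle you anticipate in case~(3) is in fact already discharged by the statement of Theorem~\ref{thm:3cases}, since case~(3) asserts that the relevant subset lies in \emph{one} subgroup $H$ conjugate to $H_1$ or $H_2$, so no separate simultaneous-conjugation argument via malnormality is needed.
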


\begin{proof}
By Theorem~\ref{thm:3cases} and Remark~\ref{rem:case3}, every 2-generated subgroup of~$G$ 
is contained in a conjugate of~$H_1$ or~$H_2$, or it is a free product of cyclics. 
In the first case the claim follows
from the hypothesis that $H_1$ and~$H_2$ are 2-free products of cyclics.
\end{proof}

Our first main result characterizes 3-generated subgroups of~$G$.

\begin{thm}[3-Generated Subgroups of Cyclically Amalgamated Free 
Products]$\mathstrut$\label{thm:3-gen}\\
Let $G=H_1 \astA H_2$ be an amalgamated free product as above, and assume that
$\rk(H_1) \ge 3$ as well as $\rk(H_2)\ge 3$.
If $H_1$ and $H_2$ are 3-free products of cyclics, then $G$ is a 3-free product of cyclics.

In particular, every 3-generated subgroup of~$G$ is a free product of $\le 3$ cyclics in this case.
\end{thm}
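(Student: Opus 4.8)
The plan is to apply the Nielsen method from Theorem~\ref{thm:3cases} to a 3-generated subgroup $U = \langle g_1, g_2, g_3\rangle$ of $G$, following the same skeleton as the proof of Proposition~\ref{prop:2-gen} but carrying out a finer case analysis because the exceptional case~(3) of the theorem now has more room to produce relations. First I would run the shortening procedure to replace the generating set by a Nielsen reduced set $\{x_1, x_2, x_3\}$, and split according to the three cases of Theorem~\ref{thm:3cases}. In case~(1) one of the $x_i$ is trivial, so $U$ is at most 2-generated and I would invoke Proposition~\ref{prop:2-gen} after checking that the rank hypotheses still apply. In case~(2) the generators freely generate $U$ as a free product, so $U$ is a free product of $\le 3$ cyclics and we are done immediately.

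The substance lies in case~(3), where by Remark~\ref{rem:case3}(b) some subset of $\{x_1, x_2, x_3\}$ lies in a single conjugate $H = wH_iw^{-1}$ of a factor, and the product of that subset is conjugate into $A\setminus\{1\}$. I would organize the argument by the size $p$ of this subset. If $p = 3$, then after conjugating we may assume all three $x_i$ lie in $H_i$, so $U$ is conjugate into $H_i$, and the conclusion follows directly from the hypothesis that $H_i$ is a 3-free product of cyclics (here the assumption $\rk(H_i)\ge 3$ guarantees that $H_i$ itself is not a counterexample and that no degeneracy forces a smaller factor). If $p = 2$, then two of the reduced generators, say $x_1, x_2$, lie in $H$ with $x_1 x_2$ conjugate into $A$; I would use the malnormality of $A$ together with the refinement results, in particular Proposition~\ref{prop:refine}, to show that the relation $x_1 x_2 \in$ (conjugate of $A$) forces $\langle x_1, x_2\rangle$ to be 2-generated inside the cyclic amalgam $A$ up to the structure of $H_i$, so that effectively $U$ is generated by the third element together with a subgroup living in a factor, again reducing to a free product of cyclics.

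The main obstacle will be the $p = 2$ subcase and ruling out a genuine one-relator phenomenon: with three generators one must verify that the single amalgamation relation $x_{i_1} x_{i_2} \in A\setminus\{1\}$ collapses to a free-product decomposition rather than surviving as a defining relator, which is precisely what distinguishes the 3-generated case (Theorem~\ref{thm:3-gen}) from the 4-generated case (where case~(c) genuinely permits a one-relator quotient). The key leverage is malnormality: because $A$ is malnormal in $G$, the conjugate of $A$ containing $x_{i_1}x_{i_2}$ meets $H$ in a controlled way, and combined with the length and ordering estimates from the two rule-propositions preceding Theorem~\ref{thm:3cases}, one can pin down the $\lambda$-lengths tightly enough to absorb the relation into one of the cyclic free factors. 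I would need to treat carefully the borderline situation where an element such as $x_i^n$ becomes conjugate into $A$ (the remark following the first rule-proposition), since this is exactly where a naive rank count can fail; showing that such torsion-like collapses still yield a free product of $\le 3$ cyclics, and do not leave a residual relation, is the delicate heart of the proof.
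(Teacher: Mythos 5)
Your skeleton---Nielsen-reduce the generating set, split along the three cases of Theorem~\ref{thm:3cases}, dispose of cases (1) and (2) quickly, and organize case (3) by the size $p$ of the subset lying in a conjugate of a factor---coincides with the paper's, and your $p=3$ subcase is handled exactly as there. But in the decisive $p=2$ subcase your plan has a genuine gap. Your assertion that the relation $x_1x_2 \in A$ (up to conjugacy) ``forces $\langle x_1,x_2\rangle$ to be 2-generated inside the cyclic amalgam $A$'' is not correct and not what is needed: $\langle x_1,x_2\rangle$ is just a 2-generated subgroup of a conjugate of $H_1$, and the fact that the product $x_1x_2$ lands in $A$ is precisely the \emph{source} of potential cancellation against $x_3$, not something malnormality makes vanish. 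You then gesture at ``length and ordering estimates'' that would ``absorb the relation into one of the cyclic free factors,'' but you supply no mechanism; Proposition~\ref{prop:refine} by itself does not do this, and the danger you correctly identify---a surviving one-relator phenomenon, which genuinely occurs for four generators in Theorem~\ref{thm:4-gen}---is not excluded by anything concrete in your outline.

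The paper's resolving idea, absent from your proposal, is structural rather than estimate-driven: enlarge $U$ to $H=\langle H_1, x_3\rangle$, taking the \emph{whole factor} $H_1$ rather than just $\langle x_1,x_2\rangle$---this extra freedom is essential. Write $x_3 = h_1\cdots h_m$ in reduced form with syllables alternating between $H_1$ and $H_2$, absorb the $A$-tail into a syllable, and strip any leading or trailing $H_1$-syllable into the factor (replacing $x_3$ by $h_1^{-1}x_3$ or $x_3h_m^{-1}$ does not change $H$); malnormality of $A$ together with $A \subset H_1$ then forces $h_1, h_m \in H_2\setminus A$. Consequently there are no shortening Nielsen transformations between elements of $H_1$ and $x_3$, so every product in $H$ has the cancellation-free property of case (2) of Theorem~\ref{thm:3cases}: in effect $H$ splits as $H_1 \ast \langle x_3\rangle$, and it is a 3-free product of cyclics (here the hypothesis on $H_1$ enters), whence $U \le H$ is a free product of $\le 3$ cyclics. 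Your sentence ``effectively $U$ is generated by the third element together with a subgroup living in a factor'' points in this direction, but working with $\langle x_1,x_2,x_3\rangle$ directly leaves you facing exactly the relation $x_1x_2\in A$ that you cannot absorb; passing to the full factor $H_1$ and normalizing $x_3$ so its reduced form begins and ends in $H_2\setminus A$ is the step that closes the argument, and it also disposes of your worry about powers $x_i^n$ becoming conjugate into $A$ without any delicate $\lambda$-length bookkeeping.
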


\begin{proof}
Let $U = \langle g_1,g_2,g_3 \rangle$ be a 3-generated subgroup of~$G$, where $g_1,g_2,g_3\in G$.
If~$U$ is of rank~2, the claim follows from Proposition~\ref{prop:2-gen}. Hence we
assume that $\rk(U)=3$. We apply the sequence of Nielsen transformations given by Theorem~\ref{thm:3cases}
and obtain a new system of generators $\{x_1,x_2,x_3\}$ of~$U$.

Since $\rk(U)=3$, case~(1) of Theorem~\ref{thm:3cases} cannot occur. If case~(2) occurs, the
group~$U$ is a non-trivial free product. Thus we are left with analyzing case~(3), and we can apply
Remark~\ref{rem:case3}.b. Depending on whether we have $p=2$ or $p=3$ there, 
and interchanging the roles of~$H_1$
and~$H_2$ if necessary, we see that there are essentially two subcases:
\begin{enumerate}
\item[(3a)] $x_1,x_2,x_3 \in H_1$,
\item[(3b)] $x_1,x_2 \in H_1$ and $x_3 \notin H_1$.
\end{enumerate}
In subcase~(3a) the claim follows from the hypothesis that~$H_1$ and~$H_2$ are 3-free products
of cyclics. In subcase~(3b) it suffices to show that the subgroup $H=\langle H_1, x_3 \rangle$
of~$G$ is a 3-free product of cyclics, since then its subgroup $U=\langle x_1,x_2,x_3\rangle$
is a product of cyclics.

Using Proposition~\ref{prop:reduced}, we write the reduced normal form of~$x_3$ as
$x_3 = h_1 \cdots h_m \, a$, where the elements~$h_i$ are alternatingly from~$H_1$ and~$H_2$,
and where $a\in A$. Here we can include the element~$a$ in~$h_m$ if $h_m\in H_1$. If $h_m\in H_2$,
we increase~$m$ by one and use~$a$ as the new element~$h_m$. In either case, we can assume that
$x_3 = h_1\cdots h_m$ with $h_i$ alternatingly from ~$H_1$ and~$H_2$.

Next we consider $H = \langle H_1, x_3\rangle$. If $h_1 \in H_1$ then we use
$H = \langle H_1, h_1^{-1} x_3 \rangle$, and if $h_m \in H_1$ then we use
$H = \langle H_1, x_3 h_m^{-1}\rangle$. In this way we see that we can assume
$x_3 = h_1 \cdots h_m$ with $h_1,h_m \in H_2$. Since $A$ is malnormal in~$G$ and $A\subset H_1$,
it follows that $h_1,h_m \in H_2 \setminus H_1$.

Consequently, there are no shortening Nielsen transformations between elements of~$H_1$ and~$x_3$.
Thus an application of the Nielsen method shows that products in~$H$ have the property stated in
case~(2) of Theorem~\ref{thm:3cases}, \ie, that~$H$ is a 3-free product of cyclics.
\end{proof}

The second main result treats 4-generated subgroups of amalgamated products.

\begin{thm}[4-Generated Subgroups of Cyclically Amalgamated Free 
Products]$\mathstrut$\label{thm:4-gen}\\
Let $G=H_1 \astA H_2$ be an amalgamated free product as above, and assume that
$\rk(H_1) \ge 4$ as well as $\rk(H_2)\ge 4$.
If $H_1$ and $H_2$ are 4-free products of cyclics, then
every 4-generated subgroup of~$G$ is a free product of $\le 4$ cyclics 
or a one-relator quotient of a free product of four cyclics.
\end{thm}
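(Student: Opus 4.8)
The plan is to imitate the proof of Theorem~\ref{thm:3-gen}, forcing everything into case~(3) of the Nielsen method and then subdividing that case according to how the four reduced generators are distributed between the two factors. I would begin with $U=\langle g_1,g_2,g_3,g_4\rangle$; if $\rk(U)\le 3$ the claim follows from Proposition~\ref{prop:2-gen} and Theorem~\ref{thm:3-gen}, so I may assume $\rk(U)=4$. Theorem~\ref{thm:3cases} then produces a Nielsen reduced generating set $\{x_1,x_2,x_3,x_4\}$. Case~(1) cannot occur since $\rk(U)=4$, and case~(2) already exhibits~$U$ as a free product of $\le 4$ cyclics. Thus the entire argument rests on case~(3): by Remark~\ref{rem:case3}.b, after a conjugation that does not affect the isomorphism type of~$U$, some subset of the generators lies in~$H_1$, at least one of them outside~$A$, with product in $A\setminus\{1\}$.

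The next step is to organize case~(3) by the number $q\in\{2,3,4\}$ of reduced generators lying in~$H_1$. If $q=4$ then $U\le H_1$; since $H_1$ is a 4-free product of cyclics---and therefore a 3-free product, as for a free product of cyclics the rank equals the number of nontrivial free factors---the subgroup~$U$ is a free product of $\le 4$ cyclics. If $q=3$, say $x_1,x_2,x_3\in H_1$ and $x_4\notin H_1$, then $\langle x_1,x_2,x_3\rangle\le H_1$ is a free product of $\le 3$ cyclics; normalizing~$x_4$ exactly as in the proof of Theorem~\ref{thm:3-gen}, so that its reduced form begins and ends in $H_2\setminus A$, I would exclude every shortening Nielsen transformation between the $H_1$-generators and~$x_4$ and conclude that~$U$ is a free product of $\le 4$ cyclics. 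Here malnormality of~$A$ additionally gives $\langle x_4\rangle\cap A=\{1\}$ (no element outside~$A$ has a nontrivial power in~$A$), so a single outside generator cannot contribute a second element of~$A$ and the one-relator alternative does not arise.

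The genuinely new phenomenon appears for $q=2$: here $x_1,x_2\in H_1$ with $x_1x_2=u^{s}\in A\setminus\{1\}$, while $x_3,x_4\notin H_1$, so the two outside generators can \emph{jointly} contribute a further element of~$A$. Applying Theorem~\ref{thm:3cases} to the pair $\{x_3,x_4\}$ and normalizing by malnormality, I distinguish two outcomes. If $\{x_3,x_4\}$ falls under case~(2), or their product lies in a conjugate $gAg^{-1}$ with $g\notin A$, then by malnormality this element is independent of the amalgamated group~$\langle u\rangle=A$ carrying the first relation, the two handles meet trivially, and $U=\langle x_1,x_2\rangle\ast\langle x_3,x_4\rangle$ is a free product of $\le 4$ cyclics. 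Otherwise the product lies in~$A$ itself, so after normalizing $x_3,x_4$ into~$H_2$ one has $x_3x_4=v^{t}=u^{t}\in A\setminus\{1\}$. Writing $d=\gcd(|s|,|t|)$, $s=ds'$, $t=dt'$ with $\gcd(s',t')=1$, the element~$u^{d}$ lies in~$U$, and eliminating it from $x_1x_2=u^{s}$ and $x_3x_4=u^{t}$ by B\'ezout leaves the single relation $(x_1x_2)^{t'}=(x_3x_4)^{s'}$, presenting~$U$ as a one-relator quotient of the free product $\langle x_1\rangle\ast\langle x_2\rangle\ast\langle x_3\rangle\ast\langle x_4\rangle$ of four cyclics.

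The hard part will be this same-conjugate subcase, where I must verify that $(x_1x_2)^{t'}=(x_3x_4)^{s'}$ is the \emph{only} relation---equivalently, that $U\cap A=\langle u^{d}\rangle$ and that the handles $\langle x_1,x_2\rangle$ and $\langle x_3,x_4\rangle$ interact solely through this subgroup. Malnormality of~$A$ is decisive: distinct conjugates of~$A$ meet trivially and~$A$ is self-normalizing, so no accidental identification can couple the two handles except through the shared powers of $u=v$, while the bound $\rk(U)=4$ forbids a second independent amalgamation relation. To make this rigorous I would use the refined estimates of Proposition~\ref{prop:refine} to control every cancellation in products of $x_1,x_2,x_3,x_4$; the most delicate points are ruling out longer reduced forms of $x_3,x_4$ that secretly yield further elements of~$A$, and confirming that no partial cancellation between the $H_1$-part and the $H_2$-part produces a relation not already a consequence of the single amalgamation identity.
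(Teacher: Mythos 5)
Your skeleton—Nielsen reduction via Theorem~\ref{thm:3cases}, elimination of cases (1) and (2), and the split of case~(3) by how many reduced generators lie in $H_1$—matches the paper's proof, and your treatment of $q=4$ and $q=3$ (the paper's subcases (3a) and (3b)) is correct, including the malnormality observation that a lone outside generator cannot have a nontrivial power in $A$. The genuine gap is in $q=2$, which is where the entire difficulty of the theorem sits. You assume that the elements of $A$ produced by the two pairs are the specific products $x_1x_2=u^{s}$ and $x_3x_4=u^{t}$. Nielsen reduction does not deliver this: Remark~\ref{rem:case3} only guarantees that the product of \emph{some} subset lies in $A\setminus\{1\}$ at one stage, and after the subsequent normalizations the paper extracts only that some word in $x_1,x_2$ lies in $A\setminus\{1\}$, whence a minimal $k\ge 1$ with $u^{k}\in\langle x_1,x_2\rangle$; the interaction of $x_3,x_4$ with $A$ is never reduced to the single product $x_3x_4$. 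Consequently your B\'ezout-derived relation $(x_1x_2)^{t'}=(x_3x_4)^{s'}$ is a special shape the relator need not have—the theorem's relator is an arbitrary word $R'(x_1,x_2,x_3,x_4)$—and an argument built on that shape cannot prove the general statement.

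Moreover, the step you yourself flag as ``the hard part'' (that the single relation is the \emph{only} one) is exactly where the paper's actual mechanism lives, and your proposal contains no workable substitute for it. The paper's device is to pass to the 3-generated subgroup $V=\langle v^{-k},x_3,x_4\rangle$ and invoke the already-proven Theorem~\ref{thm:3-gen}: $V$ is a free product of two or three cyclics. If three, a blockwise analysis via Theorem~\ref{thm:3cases} and Proposition~\ref{prop:refine} shows that $\langle x_1,x_2\rangle$ does not influence $\langle x_3,x_4\rangle$, so $U$ is a free product of cyclics; if two, a conjugation descent (Situation~I, repeatedly lowering $\ell$ whenever some $c\,v^{-\ell}c^{-1}$ with $c\in H_2\setminus\{1\}$ lies in $V$) terminates in Situation~II, where the defining relation $R(v^{-k},x_3,x_4)=1$ of $V$, after substituting the word in $x_1,x_2$ representing $u^{k}=v^{-k}$, yields the one-relator presentation. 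Malnormality alone, as you invoke it, does not rule out such coupled relations; it is this reduction to a 3-generator subgroup that does. Finally, you defer the case where $x_3$ or $x_4$ has reduced length $m\ge 2$; the paper dispatches it \emph{before} reaching $x_3,x_4\in H_2\setminus A$, by noting that an essential cancellation would require a subword containing both $x_3$ and $x_4$ conjugate into $A$ within the same conjugated factor, which $m\ge 2$ precludes. Without these ingredients the proposal does not establish either alternative of the theorem in the critical subcase.
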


\begin{proof}
Let $U = \langle g_1,g_2,g_3,g_4 \rangle$ be a 4-generated subgroup of~$G$, where $g_1,g_2,g_3,g_4\in G$.
If~$U$ is of rank~3, the claim follows from Theorem~\ref{thm:3-gen}. Hence we
assume that $\rk(U)=4$. We apply the sequence of Nielsen transformations given by Theorem~\ref{thm:3cases}
and obtain a new system of generators $\{x_1,x_2,x_3,x_4\}$ of~$U$.

Since $\rk(U)=4$, case~(1) of Theorem~\ref{thm:3cases} cannot occur. If case~(2) occurs, the
group~$U$ is a non-trivial free product. Thus we are left with analyzing case~(3), and we can apply
Remark~\ref{rem:case3}.b. Depending on whether $p=2$ or $p=3$ or $p=4$ there, 
and interchanging the roles of~$H_1$ and~$H_2$ if necessary, we see that there are essentially three subcases:
\begin{enumerate}
\item[(3a)] $x_1,x_2,x_3,x_4 \in H_1$;
\item[(3b)] $x_1,x_2,x_3 \in H_1$ and $x_4 \notin H_1$;
\item[(3c)] $x_1,x_2 \in H_1$ and $x_3,x_4 \notin H_1$.
\end{enumerate}
In subcase~(3a) the claim follows from the hypothesis that~$H_1$ and~$H_2$ are 4-free products
of cyclics. In subcase~(3b) we may argue analogously as in the proof of case~(3b) in 
Theorem~\ref{thm:3-gen}. More precisely, we consider $H = \langle H_1, x_4\rangle$
and write~$x_4$ in the form $x_4 = h_1 \cdots h_m$, where $h_1,h_m \in H_2 \setminus H_1$
and the elements $h_i$ are alternatingly from~$H_1$ and~$H_2$.
Again there are no shortening Nielsen transformations between elements of~$H_1$ and~$x_4$.
Consequently, an application of the Nielsen method shows that~$H$ is a 4-free product of cyclics.
Hence its subgroup $U = \langle x_1,x_2,x_3,x_4 \rangle$ is a free product of cyclics.

It remains to consider the case~(3c). The group $U = \langle x_1,x_2,x_3,x_4 \rangle$ is a
subgroup of $H = \langle H_1, x_3,x_4\rangle$. Arguing as in the proof of Theorem~\ref{thm:3-gen},
we may assume that we can write the reduced normal forms of~$x_3$ and~$x_4$ as
$x_3 = h_1 \cdots h_m$ and $x_4 = k_1 \cdots k_n$ where $m,n\ge 1$, where
$h_1,h_m,k_1,k_n \in H_2 \setminus A$, and where the elements $h_1,\dots,h_m$ as well as the elements
$k_1,\dots,k_n$ are alternatingly from~$H_1$ and~$H_2$.

By Theorem~\ref{thm:3-gen}, the subgroups $\langle x_1,x_2,x_3\rangle$
and $\langle x_1,x_2,x_4\rangle$ are free products of cyclics.
If $m\ge 2$, we form words in $H_1$, $x_3$, and $x_4$. 
By Theorem~\ref{thm:3cases} and Proposition~\ref{prop:refine}, essential new cancellations 
only can happen if~$x_3$ and~$x_4$ are involved. That is, there must be a subword which contains 
both~$x_3$ and~$x_4$ and which is conjugate to an element of~$A$, and~$x_3$ and~$x_4$ 
are in a proper conjugate the same factor~$H_1$ (resp.~$H_2$). 
Hence, since $m\ge 2$, this conjugate cannot produce essential cancellations between $H_1$, $x_3$ and~$x_4$.
Altogether, this implies that $U= \langle x_1,x_2,x_3,x_4\rangle$ is a free product of cyclics.

If $n\ge 2$, we argue analogously. Hence we are left with the case $m=n=1$, \ie, with
$x_3,x_4 \in H_2 \setminus A$. Recall that, by Assumption~\ref{ass:A}, the element
$u=\phi(e)$ is contained in~$H_1$. By Theorem~\ref{thm:3cases} and Proposition~\ref{prop:refine},
some word in $x_1,x_2$ has to be contained in $A\setminus \{1\}$. Since~$u$ generates~$A$,
this word is a power of~$u$. Hence there exists a
minimal exponent $k\ge 1$ such that $u^k \in \langle x_1,x_2\rangle$.

Next we recall that the element $v=\psi(e) \in H_2$ also generates~$A$ and satisfies
$v=u^{-1}$. Let us look at the subgroup $V = \langle v^{-k}, x_3,x_4\rangle$
of~$U = \langle x_1,x_2,x_3,x_4\rangle$. Since~$V$ is a 3-generated subgroup of~$G$,
we know from Theorem~\ref{thm:3-gen} that~$V$ is a free product of two or three cyclics.
Notice that~$V$ cannot be cyclic, as we have $\rk(U)=4$.
Moreover, recall that $\rk(H_2)\ge 4$ and that~$H_2$ is a 4-free product of cyclics by
the hypotheses.

First, we consider the case that $V = \langle v^{-k}, x_3, x_4 \rangle$ is a free product 
of three cyclics. Again we use the Theorem~\ref{thm:3cases} in combination with 
Proposition~\ref{prop:refine}. Since~$A$ is infinite and malnormal, we may assume that 
no element $v^{-i}$ with $1 \le i \le k-1$ is in $\langle x_3,x_4\rangle$. 
Here we may argue in a symmetric manner by suitable conjugations 
and eventual replacings. 

Now we consider non-trivial words in $x_1,x_2,x_3,x_4$ by viewing them blockwise in a combination 
of cases~(2) and~(3) of Theorem~\ref{thm:3cases}. These words are never equal to the identity element, 
in other words, the elements of $\langle x_1,x_2\rangle$ do not influence the 
generators of~$\langle x_3,x_4\rangle$. Therefore $U=\langle x_1,x_2,x_3,x_4\rangle$
is a free product of cyclics in this case.

Consequently, we are now left with the case when $V = \langle a\rangle \ast \langle b\rangle$
is a free product of two cyclics, where $a,b\in V$. 
We distinguish the following two situations.

\medskip
{\it Situation I:}\/ There exist a number $\ell \in \{1,\dots,k-1\}$ and an element
$c \in H_2 \setminus \{1\}$ such that $c v^{-\ell} c^{-1} \in V = \langle a\rangle
\ast \langle b\rangle$.  

In this situation we replace~$a$ by $a' = c^{-1}ac$, we replace~$b$ by $b'=c^{-1}bc$,
we replace~$x_1$ by $x_1' = c^{-1}xc$, and we replace~$x_2$ by $x_2' = c^{-1} x_2 c$.
The group $U' = \langle x_1',x_2',a',b'\rangle$ is conjugate to 
\[
\langle x_1,x_2,a,b\rangle \;=\; \langle x_1,x_2, V \rangle \;=\; \langle x_1,x_2,x_3,x_4 \rangle
\;=\; U
\]
and has therefore the same combinatorial structure. Thus it suffices to prove that~$U'$ has the
claimed structure. After we apply the Nielsen method to~$U'$, we may assume that
$a',b' \in H_2$ and $x_1',x_2' \in H_1$. If we are still in Situation~I, but with a smaller
number~$\ell$, we repeat the construction. Finally, we will reach Situation~II.

\medskip
{\it Situation II:}\/ There is no number $\ell\in \{1,\dots,k-1\}$ with
$cv^{-\ell}c^{-1} \in V$ for some $c\in H_2 \setminus \{1\}$.

In this situation we use the fact that~$V = \langle v^{-k}, x_3,x_4\rangle$ is a free product
of two cyclics to conclude that there is a defining relation $R(v^{-k},x_3,x_4)=1$. Since $v^{-k}=u^k$
is also a word in $x_1,x_2$, we can write this as a relation $R'(x_1,x_2,x_3,x_4)=1$.
Hence $U=\langle x_1,x_2,x_3,x_4\rangle$ is a one-relator quotient
of a free product of four cyclics, and the proof is complete.
\end{proof}

A special case of the preceding two theorems arises when~$G$ is a group of F-type.
Let us first recall the definition.

\begin{defi}
A group~$G$ is said to be \textbf{of F-type} if it admits a presentation of the form
\[
G = \langle a_1,\dots,a_n \mid a_1^{e_1} = \cdots = a_n^{e_n} = U(a_1,\dots,a_p)\, 
V(a_{p+1},\dots,a_n) = 1\rangle,
\]
where $n\ge 2$, $e_1,\dots,e_n \in \mathbb{N} \setminus \{1\}$, $p\in \{1,\dots,n-1\}$, and
where $U(a_1,\dots,a_p)$ and $V(a_{p+1},\dots,a_n)$ are cyclically reduced words of infinite order
in the free product $\langle a_1,\dots,a_n \mid a_1^{e_1} = \cdots = a_n^{e_n} =1 \rangle$.
\end{defi}

At this point we now suppose that~$G$ does not reduce to a free product of cyclics and that $U(a_1, \dots, a_p)\, 
V(a_{p+1}, \dots, a_n)$ involves all the generators.
Then~$G$ decomposes as a non-trivial amalgamated free product $G=H_1 \astA H_2$
as above, where
\begin{align*}
H_1 &\;=\; \langle a_1,\dots,a_p \mid a_1^{e_1} = \cdots = a_p^{e_p} = 1 \rangle, &\\
H_2 &\;=\; \langle a_{p+1},\dots,a_n \mid a_{p+1}^{e_{p+1}} = \cdots = a_n^{e_n} = 1 \rangle, &\\
A &\;=\; \langle U(a_1,\dots,a_p) \rangle = \langle V(a_{p+1},\dots,a_n)^{-1} \rangle.
\end{align*}

The following corollary follows from Proposition~\ref{prop:2-gen}, Theorem~\ref{thm:3-gen},
and Theorem~\ref{thm:4-gen}.

\begin{cor}[Low Rank Subgroups of Groups of F-Type]$\mathstrut$\label{cor:F-type}\\
Let $G$ be a group of F-type as above. Suppose that~$A$ is malnormal in~$G$. 
Then the following claims hold:
\begin{enumerate}
\item[(a)] If $p\ge 2$ and $n-p\ge 2$ then every 2-generated subgroup of~$G$ 
is a free product of cyclics.

\item[(b)] If $p\ge 3$ and $n-p\ge 3$ then every 3-generated subgroup of~$G$
is a free product of cyclics.

\item[(c)] If $p\ge 4$ and $n-p \ge 4$ then every 4-generated subgroup of~$G$
is a free product of cyclics or a one-relator quotient of a free product of four cyclics.
\end{enumerate}
\end{cor}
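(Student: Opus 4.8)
The plan is to verify that the decomposition $G = H_1 \astA H_2$ exhibited above satisfies Assumption~\ref{ass:A}, record two standard structural facts about the factors, and then read off each of the three claims directly from Proposition~\ref{prop:2-gen}, Theorem~\ref{thm:3-gen}, and Theorem~\ref{thm:4-gen}. Since the result is labelled a corollary, the work is entirely a matter of matching hypotheses rather than of new argument.

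First I would check the hypotheses of Assumption~\ref{ass:A}. The factors $H_1$ and $H_2$ are finitely generated free products of cyclic groups by construction. Since $U(a_1,\dots,a_p)$ has infinite order, $A = \langle U\rangle$ is infinite cyclic; and because $\rk(H_1) = p \ge 2$ and $\rk(H_2) = n-p \ge 2$ while $A$ is cyclic of rank~$1$, the cyclic subgroup $A$ has proper image in each factor, so $A$ injects properly into $H_1$ and $H_2$. For condition~(1), a generator $e$ of $A$ satisfies $\phi(e) = U(a_1,\dots,a_p)$ and $\psi(e) = V(a_{p+1},\dots,a_n)^{-1}$, so that $uv = U\,V^{-1}$ involves exactly the generators occurring in $U\,V$; by hypothesis this is all of $a_1,\dots,a_n$. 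Condition~(2), the malnormality of $A$ in $G$, is assumed outright. Hence $G$ is a cyclically amalgamated free product in the sense required by the three cited results.

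Next I would record that a finitely generated free product of nontrivial cyclic groups has rank equal to the number of its factors, by Grushko's theorem, so that $\rk(H_1) = p$ and $\rk(H_2) = n-p$. Moreover, by the Kurosh subgroup theorem every subgroup of such a free product is again a free product of cyclic groups; combined with Grushko's theorem applied to the subgroup, any $k$-generated subgroup is a free product of at most $k$ cyclics. Thus each $H_i$ is a $k$-free product of cyclics for every $k \ge 1$, and in particular $H_1$ and $H_2$ are simultaneously $2$-, $3$-, and $4$-free products of cyclics. With these preparations, claim~(a) follows from $p \ge 2$, $n-p \ge 2$ via Proposition~\ref{prop:2-gen}; claim~(b) follows from $p \ge 3$, $n-p \ge 3$ via Theorem~\ref{thm:3-gen}; and claim~(c) follows from $p \ge 4$, $n-p \ge 4$ via Theorem~\ref{thm:4-gen}, which furnishes precisely the stated dichotomy.

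As the argument is a direct specialization, there is no genuine obstacle. The only points requiring care are the two structural facts about free products of cyclics invoked above: that the rank equals the number of factors (needed to meet the rank hypotheses $\rk(H_i) \ge 2,3,4$) and that subgroups inherit the free-product-of-cyclics structure (needed to certify that the $H_i$ are $k$-free products of cyclics). Both are classical, but it is worth flagging that the rank lower bound genuinely uses Grushko's theorem rather than, say, the abelianization, since a free product of finite cyclic groups of coprime orders can abelianize to a cyclic group and thereby obscure the true rank.
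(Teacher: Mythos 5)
Your proof is correct and takes exactly the route the paper intends: the paper itself offers no argument beyond the sentence that the corollary ``follows from Proposition~\ref{prop:2-gen}, Theorem~\ref{thm:3-gen}, and Theorem~\ref{thm:4-gen}.'' Your write-up simply makes the implicit verification explicit --- checking Assumption~\ref{ass:A} for the exhibited decomposition $G=H_1\astA H_2$ and using Kurosh plus Grushko to certify that the factors $H_i$ have the required ranks and are $k$-free products of cyclics --- all of which is accurate, including your caveat that the rank bound needs Grushko rather than abelianization.
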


Parts~(a) and~(b) of this corollary were previously shown in~\cite{FRW}.
The next example shows that part~(c) is optimal.

\begin{exa}
Consider the group
\[
G \;=\; \langle a_1,\dots,a_6 \mid a_1^2 = a_2^2 = a_3^2 = a_4^2 = a_5^2 = a_6^3 = 
a_1 \cdots a_6 =1 \rangle .
\]
It is of F-type with $n=6$, $p=3$, with $U(a_1,a_2,a_3) = a_1 a_2 a_3$, and with 
$V(a_4,a_5,a_6) = a_6^{-1} a_5^{-1} a_4^{-1}$. Moreover, notice that the 
subgroup $A= \langle U(a_1,a_2,a_3) \rangle
= \langle V(a_4,a_5,a_6)^{-1} \rangle$ is malnormal in~$G$. 
Therefore Corollary~\ref{cor:F-type}.b
says that every 3-generated subgroup of~$G$ is a free product of cyclics.

Now we consider the 4-generated subgroup $H=\langle x_1,x_2,x_3,x_4\rangle$ of~$G$, where
$x_1 = a_1 a_2$, $x_2 = a_1 a_3$, $x_3 = a_1 a_4$, and $x_4 = a_1 a_5$. 
Then we calculate
\[
x_1 x_2^{-1} x_3 x_4^{-1} x_1^{-1} x_2 x_3^{-1}x_4 \;=\; x_6^{-2} \;=\; x_6 \in H, 
\]
and thus $a_5 = a_5^{-1} = a_6 x_1 x_2^{-1} x_3 \in H$, $a_1=x_4 a_5 \in H$, $a_2 = a_1x_1 \in H$,
$a_3 = a_1 x_2 \in H$, and $a_4=a_1 x_3 \in H$. This proves $H=G$.
However, the group~$G$ is neither a free product of cyclics nor a one-relator 
quotient of a free product of four cyclics. 

It can be shown that the claim of Corollary~\ref{cor:F-type}.c holds if
\begin{enumerate}
\item[(1)] $p\ge 3$, $n-p \ge 3$, and at most $n-2$ of the exponents~$e_i$
are equal to~2, or if

\item[(2)] $p=3$, $n-p \ge 4$, or if $p\ge 4$, $n-p=3$.
\end{enumerate}
In this sense, Corollary~\ref{cor:F-type}.c is optimal.
\end{exa}

The final remark in this section connects it to the next one.

\begin{rem}
Let~$G$ be a group of F-type which is not a free product of cyclics.
Assume that every subgroup of~$G$ of infinite index is a free product of cyclics.
Then~$G$ is a co-compact planar discontinuous group (\cf~\cite[Thm.~3.5.45]{FMRSW}).
\end{rem}

%%%%%%%%%%%%%%%%%%%%%%%%%%%%%%%%%%%%%%%%%%%%
%
% Section 4: Free Subgroups of Infinite Index
%
%%%%%%%%%%%%%%%%%%%%%%%%%%%%%%%%%%%%%%%%%%%%

\section{Free Subgroups of Infinite Index}\label{sec4}

In this section we turn our attention to free subgroups of infinite index
in cyclically amalgamated free products and in certain HNN extensions. 
The following material is largely known and should be considered as a survey
of the application of purely group theoretic methods to the topic at hand.

The central open question for free subgroups of 
infinite index is the {\it Surface Group Conjecture}.
The first version of this conjecture was formulated 1980 in the Kourovka Notebook
by O.V. Melnikov (\cf~\cite[Problem 7.36]{KM}). It has since been disproved, but numerous improved 
versions were formulated over time (for instance, see~\cite{FKMRR,CFR,FRW}), and the 
latest one reads as follows (\cf~\cite{Wil2}).

\begin{conj}[{\bf The Surface Group Conjecture}]$\mathstrut$\label{conj:SG}\\
Let $G$ be an infinite, finitely presented group, such that every
subgroup of infinite index is free. Must $G$ be isomorphic to either a free group
or a surface group?
\end{conj}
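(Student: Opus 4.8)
The plan is to reduce the general conjecture to the special cases established in this paper, namely the cyclically amalgamated free products of Theorem~\ref{thm:SGCforAFP} and the HNN extensions of Theorem~\ref{thm:SGCforHNN}, by producing an appropriate splitting of~$G$ over an infinite cyclic subgroup. Two elementary reductions clear the ground first. Since~$G$ is infinite, every finite subgroup has infinite index and is therefore free by hypothesis, hence trivial; thus~$G$ is torsion-free. Next, if~$G$ is not already free, then~$G$ must be one-ended: were~$G$ to have more than one end, Stallings' ends theorem would exhibit a nontrivial free-product decomposition $G = A \ast B$ (the amalgamating subgroup being trivial, as~$G$ is torsion-free, and the ascending HNN case over the trivial group contributing only a free factor~$\mathbb{Z}$), and both factors, having infinite index, would be free by hypothesis, forcing~$G$ itself to be free. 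So from now on we may assume that~$G$ is torsion-free, one-ended, finitely presented, and not free, and we aim to show that~$G$ is a surface group.

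Next I would attempt to locate a splitting of~$G$ over an infinite cyclic subgroup and bring the machinery of Sections~\ref{sec3} and~\ref{sec4} to bear. The natural device is the JSJ decomposition (in the sense of Rips--Sela and Bowditch) of the one-ended group~$G$ along its cyclic subgroups, interpreted through Bass--Serre theory~\cite{BS}. As soon as this decomposition is nontrivial, each vertex group has infinite index and is therefore free by hypothesis, while each edge group is infinite cyclic; the quadratically hanging vertices contribute genuine surface pieces. The goal would then be to show that the absence of any proper free decomposition forces the entire graph of groups to assemble into a single closed surface group. In the basic case of a one-edge splitting, this presents~$G$ either as an amalgamated product $H_1 \astA H_2$ with~$A$ infinite cyclic or as an HNN extension $\langle t, H \mid t u t^{-1} = v\rangle$; after verifying the malnormality condition of Assumption~\ref{ass:A}~(2) and the condition that the amalgamating word involves all generators, one would invoke Theorem~\ref{thm:SGCforAFP} or Theorem~\ref{thm:SGCforHNN} directly.

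The hard part ---and the reason the conjecture remains open in full generality--- is precisely the production and control of such a splitting. A one-ended, torsion-free, finitely presented, non-free group need not visibly split over~$\mathbb{Z}$ at all: a JSJ-rigid group carries no essential cyclic splitting, and for such a group one must argue by an entirely different route that it is nonetheless a surface group, for instance by establishing that~$G$ is a Poincar\'e duality group of dimension~$2$ and then invoking the Eckmann--M\"uller classification. Deriving $\mathrm{cd}(G) = 2$ together with the duality isomorphisms purely from the hypothesis that every infinite-index subgroup is free seems to require new input; the converse implication (Strebel's theorem, that infinite-index subgroups of a $\mathrm{PD}_2$ group have cohomological dimension~$\le 1$) is classical, but the forward direction is not. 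Even when a cyclic splitting does exist, propagating the hypothesis through the JSJ pieces and checking the malnormality and generation conditions of Assumption~\ref{ass:A} is delicate and can fail. For this reason I would expect a complete group-theoretic proof to hinge on a rigidity statement ---that a one-ended group all of whose infinite-index subgroups are free admits an essential $\mathbb{Z}$-splitting unless it is already a closed surface group--- which is exactly the step that the topological arguments of Wilton~\cite{Wil2} manage to circumvent in the one-relator case, and which remains the principal obstacle in general.
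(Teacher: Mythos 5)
You have set out to prove something that neither you nor the paper can prove: the statement you were given is Conjecture~\ref{conj:SG}, which is open, and the paper contains no proof of it. What the paper actually proves are two special cases --- Theorem~\ref{thm:SGCforAFP} and Theorem~\ref{thm:SGCforHNN} --- in which the splitting of~$G$ over an infinite cyclic subgroup is part of the \emph{hypothesis}, not something derived; for the general statement the paper explicitly defers to Wilton's topological work on the one-relator case~\cite{Wil2}. To your credit, your proposal recognizes this: it is a reduction scheme plus an honest identification of the obstruction, not a claimed proof. Your preliminary reductions are correct --- finite subgroups have infinite index in an infinite group, hence are free, hence trivial, so $G$ is torsion-free; and Stallings' ends theorem together with Grushko then forces a non-free $G$ to be one-ended, since a torsion-free multi-ended group is a nontrivial free product (or $\mathbb{Z}$) whose factors have infinite index and would be free. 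The gap you name --- producing an essential $\mathbb{Z}$-splitting, handling the JSJ-rigid case, or alternatively establishing that $G$ is a $\mathrm{PD}_2$ group and invoking Eckmann--M\"uller --- is indeed exactly where the conjecture remains open, and you are right that Strebel's theorem gives only the converse direction.

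Two technical corrections to the middle part of your plan. First, you propose to verify the malnormality of Assumption~\ref{ass:A}~(2) and the condition that the amalgamating word involves all generators before invoking Theorem~\ref{thm:SGCforAFP}; but that theorem, as stated, requires neither --- its hypotheses are only that $A$ is infinite cyclic and of \emph{infinite index} in both factors. That infinite-index condition is the one you actually need to check, and it can fail: the Klein bottle group $\mathbb{Z} \astA \mathbb{Z}$ with $A$ of index~$2$ in each factor is a one-edge cyclic splitting to which Theorem~\ref{thm:SGCforAFP} does not apply (it is of course a surface group, but your invocation would not detect this), so even the one-edge case of your reduction needs a separate argument when the edge group has finite index in a vertex group. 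Second, note that in any nontrivial one-edge splitting the vertex groups automatically have infinite index in~$G$, so their freeness is immediate from the hypothesis; the delicate propagation issue arises only for multi-edge JSJ decompositions, where collapsing and reassembling the graph of groups into a single surface is the unresolved step you correctly flag. In short: your proposal is a reasonable map of the territory, with the two hypothesis mix-ups above, but it neither is nor could be a proof of the conjecture, and it should not be read as paralleling a proof in the paper, because the paper proves only the pre-split cases.
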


Here a \textbf{surface group} is a group isomorphic to the fundamental group
of an orientable surface of genus~$g$, \ie,
\[
G \;\cong\; \langle a_1,b_1, a_2,b_2,\dots,a_g,b_g \mid [a_1,b_1] = \cdots = [a_g,b_g] = 1 \rangle
\]
or of a non-orientable surface of genus~$g$, \ie,
\[
G \;\cong\; \langle a_1,\dots,a_g \mid a_1^2 \cdots a_g^2 = 1 \rangle .
\]

A celebrated result of H.\ Wilton is that the surface group conjecture
holds for infinite one-relator groups (\cf~\cite{Wil2}). Its proof is based on a number of deep 
results in topology. Another important situation where this conjecture holds is
the case of fully residually free groups (see~\cite{CFR}).

For the following kind of amalgamated product, a chiefly group theoretic 
proof of the conjecture is available.

\begin{thm}[The Surface Group Conjecture for Cyclically Amalgamated Free 
Products]$\mathstrut$\label{thm:SGCforAFP}\\
Let $G$ be a finitely generated group which is an amalgamated free product
$G = H_2 \astA H_2$, where~$A$ is an infinite cyclic group of infinite index 
in~$H_1$ and~$H_2$. Assume that $G$ is not free, but every subgroup of infinite 
index in~$G$ is free. Then the Surface Group Conjecture~\ref{conj:SG} holds for~$G$.
\end{thm}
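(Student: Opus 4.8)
The plan is to analyze the structure of $G = H_1 \astA H_2$ under the hypothesis that every infinite-index subgroup is free, and to deduce a presentation forcing $G$ to be a surface group. First I would observe that $A = \langle u \rangle$ is itself an infinite-index subgroup of $G$ (being cyclic while $G$ is not, as $G$ is not free and properly contains both factors), so $A$ is free --- which is automatic --- but more usefully, I would apply the hypothesis to the factors. Each $H_i$ has infinite index in $G$ whenever the other factor is nontrivial modulo $A$, so if $H_i$ had infinite index it would be free; but $A = \langle u \rangle \le H_i$ with $u$ of infinite order, and a finitely generated free group containing an infinite cyclic malnormal-in-$G$ subgroup of infinite index in itself constrains $H_i$ to be a free group in which $\langle u \rangle$ is malnormal. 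The key reduction is therefore to pin down each $H_i$ as a free group (or nearly so) and to understand how $u$ sits inside it.

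Next I would use malnormality of $A$ together with the freeness of infinite-index subgroups to force $u$ to be a primitive, or at least a specific type of, element in each free factor $H_i$. The central mechanism is a Euler-characteristic / rank count: writing $\operatorname{rk}(H_i) = n_i$ and using the standard formula for the amalgam, $G$ has a presentation with $n_1 + n_2$ generators and one relation identifying the two expressions for $u$, so $G = \langle \text{generators of } H_1, H_2 \mid W_1 = W_2 \rangle$ where $W_1, W_2$ are the words for $u$ in the respective free groups. This exhibits $G$ as a one-relator group. The plan is then either to invoke the structure forced by ``every infinite-index subgroup is free'' directly, or to reduce to Wilton's theorem for one-relator groups once the one-relator presentation is in hand; but since the point of this theorem is to give a group-theoretic proof, I would instead argue that malnormality of $\langle u \rangle$ in each free $H_i$ forces $u$ to be non-proper-power and the relator $W_1 W_2^{-1}$ to be, up to Nielsen automorphism of the free group on all generators, the standard surface relator $[a_1,b_1]\cdots[a_g,b_g]$ or $a_1^2 \cdots a_g^2$.

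The bridge from ``one-relator with all infinite-index subgroups free'' to ``surface relator'' is where I would spend the most care. The plan is to show that the relator must be \emph{quadratic}: each generator occurs exactly twice (with signs) in the single defining word. The justification is that if some generator occurred with total exponent pattern allowing a free splitting or a Baumslag-Solitar-like piece, one could build a finitely generated infinite-index subgroup that is not free (e.g., a surface subgroup of a surface-with-boundary piece, or a subgroup isomorphic to $\mathbb{Z}^2$), contradicting the hypothesis. Once the relator is quadratic and the group is not free, the classical classification of groups defined by a single quadratic word (via the normal-form theory recalled in Section~\ref{sec2} applied to the amalgam, or equivalently by putting the quadratic word into canonical form through Nielsen/Reidemeister moves) yields exactly the orientable or non-orientable surface-group presentations listed after Conjecture~\ref{conj:SG}.

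The step I expect to be the main obstacle is ruling out the non-surface quadratic and non-quadratic possibilities cleanly by group-theoretic means alone --- in particular, showing that the malnormality of $A$ and the freeness of all infinite-index subgroups together \emph{exclude} the presence of any finitely generated infinite-index subgroup isomorphic to $\mathbb{Z} \times \mathbb{Z}$, to a Klein-bottle group, or to a free product with an essential proper power. Concretely, the hard part is the case analysis on the form of $u$ inside the free factors $H_i$: malnormality forbids $u$ from being conjugate into a proper free factor or from commuting with anything outside $\langle u \rangle$, and I would leverage exactly this, via the Nielsen-reduction machinery of Theorem~\ref{thm:3cases} and Proposition~\ref{prop:refine}, to force the two halves of the relator to glue into the genus-$g$ surface word. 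Handling the orientable versus non-orientable dichotomy, and verifying that no lower-complexity (hence free) or higher-complexity (hence containing a non-free infinite-index subgroup) relator survives, is the delicate bookkeeping that carries the real content of the theorem.
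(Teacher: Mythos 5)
Your opening reduction is exactly the paper's: since $A$ is infinite cyclic of infinite index in both factors, $H_1$ and $H_2$ have infinite index in $G$, hence are free, and $G$ is a cyclically pinched one-relator group $\langle H_1, H_2 \mid W_1 = W_2\rangle$. After that, however, your proposal diverges, and the step you yourself flag as ``the main obstacle'' is a genuine gap rather than delicate bookkeeping. Your plan is to force the relator $W_1W_2^{-1}$ to be quadratic by exhibiting, for any non-quadratic relator, a finitely generated non-free subgroup of infinite index of the form $\mathbb{Z}\times\mathbb{Z}$, a Klein bottle group, or a Baumslag--Solitar-like piece. This fails in precisely the main case: when neither $u$ nor $v$ is a proper power, $G$ is word hyperbolic by \cite{JR}, so it contains \emph{no} subgroup isomorphic to $\mathbb{Z}\times\mathbb{Z}$, no Klein bottle group, and no Baumslag--Solitar subgroup whatsoever --- yet there are abundant non-quadratic, root-free choices of $u$ and $v$ (say $u=[a,b]$ and $v$ a root-free word in which some generator occurs three times) yielding one-ended hyperbolic non-surface groups. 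For such groups the only known source of a finitely generated, non-free, infinite-index subgroup is a \emph{surface} subgroup, and its existence is exactly Wilton's deep theorem, not something the Nielsen machinery of Theorem~\ref{thm:3cases} and Proposition~\ref{prop:refine} can produce: that machinery controls generating systems of given subgroups, but gives no mechanism for forcing quadraticity of the relator. The paper's proof accordingly does not attempt this; in its main case it establishes hyperbolicity via \cite{JR}, one-endedness via Stallings' ends theorem \cite{Sta}, and then invokes Wilton's result (\cite{Wil1}, Corollary~4) that a hyperbolic, one-ended cyclically pinched one-relator group all of whose infinite-index subgroups are free is a surface group. A purely elementary replacement for that input is not available, so your central step, as proposed, cannot be completed.

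Two secondary problems. First, malnormality of $A$ is \emph{not} a hypothesis of this theorem (unlike Assumption~\ref{ass:A} used elsewhere in the paper), so your repeated appeals to it are unfounded here; indeed, malnormality would force $u$ and $v$ to be root-free and render the proper-power cases vacuous, whereas the paper must and does treat them: for $u=g^2$, $v=h^2$ it identifies $\langle g,h \mid g^2=h^2\rangle$ as a non-orientable genus-$2$ surface group or else as a non-free infinite-index subgroup, for $u=g^m$, $v=h^n$ with $m+n\ge 5$ it exhibits the explicit free abelian subgroup $\langle g^m, gh\rangle$ of rank $2$, and it dispatches the mixed case directly. Your sketch omits this case analysis entirely. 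Second, once you have a one-relator presentation, reducing a quadratic relator to the standard surface word is indeed classical, but that part was never the issue; the content of the theorem lives in the step your proposal leaves unproved.
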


\begin{proof}
Since~$A$ is infinite and has infinite index in~$H_1$ and~$H_2$, the factors~$H_1$
and~$H_2$ have infinite index in~$G$. By assumption, it follows that~$H_1$ and~$H_2$ are
free groups. Thus~$G$ is a free product of two free groups amalgamated in an infinite 
cyclic subgroup and is also known as a \textbf{cyclically pinched one-relator
group}. 

For this kind of group, \ie, for $G= \langle H_1,H_2;\, u=v\rangle$, where~$H_1$ and~$H_2$ are free
groups and where $u\in H_1$ and $v\in H_2$,
the surface group conjecture was shown in~\cite[Thm.~3.1.1]{CFR}.
For the convenience of the reader, we briefly recall the main steps.
We distinguish four cases depending on whether~$u$ and~$v$ are proper powers of elements of~$G$.

\medskip
\noindent{\it Case 1:}\/ Assume that there exist $g,h\in G$ such that $u=g^2$ and $v=h^2$.
If already $G = \langle g,h \mid g^2 = h^2\rangle$, then~$G$ is a non-hyperbolic, non-orientable
surface group of genus~2. If this is not the case then
$\langle g,h \mid g^2 = h^2\rangle$ is a proper subgroup of~$G$, and hence~$G$ 
contains a non-free subgroup of infinite index.

\medskip
\noindent{\it Case 2:}\/  Assume that there exist $g,h\in G$ such that $u=g^m$ and $v=h^n$
with $m\ge 2$, $n\ge 2$, and $m+n\ge 5$. Then the subgroup $U = \langle g^m, gh\rangle$
is a free abelian subgroup of rank~2 of~$G$, in contradiction to Proposition~\ref{prop:2-gen}.

\medskip
\noindent{\it Case 3:}\/ Assume that $u$ is no proper power in~$G$ and $v=h^n$ for some $h\in G$
and $n\ge 2$. Then $u = v^{-1} = h^{-n}$ yields a contradiction.

\medskip
\noindent{\it Case 4:}\/ Assume that neither~$u$ nor~$v$ is a proper power in~$G$.
In this case,~\cite[Thm.~A]{JR} shows that~$G$ is a word hyperbolic group.
Next we use~\cite[Thm.~4.11]{Sta} to show that~$G$ is one-ended. In fact, if~$G$
has no ends, then~$G$ is finite. If~$G$ has two ends, then~$G$ has an infinite
cyclic subgroup of finite index. Finally, if~$G$ has more than two ends, it has infinitely
many ends and is a non-trivial free product. As all of these cases are impossible for the
groups~$G$ under consideration, they have to be one-ended.
The final step is an application of H.\ Wilton's result that a hyperbolic, one-ended
cyclically pinched one-relator group, all of whose subgroups of infinite index are free,
is a surface group (\cf~\cite[Corollary~4]{Wil1}).
\end{proof}

It is also natural to ask for an analogue of this theorem for HNN extensions.
Recall that a group~$G$ is called an \textbf{HNN extension} of a group~$H$
if there exist subgroups $A,B$ of~$H$ and an isomorphism $\phi:\; A \longrightarrow B$ 
such that
\[
G \;=\; \langle H, t \mid t\, a\, t^{-1} = \phi(a) \text{\ \rm for all\ }a\in A \rangle .
\]
HNN extensions are natural analogues of amalgamated free products. For instance, they
arise as the fundamental groups of unions of topological spaces where the intersection 
is not connected.

Not much seems to be known for low rank subgroups and free subgroups 
of infinite index of HNN extensions. For 2-generated and 3-generated subgroups, 
the following result from~\cite{FRR} is the best we know. If we drop
the assumption that~$u$ is not conjugate in~$H$ to~$v$ or~$v^{-1}$,
a slightly stronger version for 2-generated subgroups of~$G$ is given 
in~\cite[Thm.~1.6.42]{FMRSW}.

\begin{thm}[Low Rank Subgroups of HNN Extensions]$\mathstrut$\label{thm:23HNN}\\
Let $H$ be a free group, let $u,v\in H$ be non-trivial elements, none of which is a proper
power in~$H$, and assume that $u$ is not conjugate in~$H$ to~$v$ or~$v^{-1}$. 
Let $G = \langle H, t \mid tut^{-1} =v \rangle$ be the corresponding HNN extension of~$H$.
\begin{enumerate}
\item[(a)] Every 2-generated subgroup of~$G$ is either abelian
or a free group of rank two.

\item[(b)] Every 3-generated subgroup of~$G$
is either free of rank $\le 3$ or has a one-relator presentation for its three generators.
\end{enumerate}
\end{thm}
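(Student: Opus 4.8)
The plan is to develop the Nielsen reduction method for the HNN extension $G = \langle H, t \mid tut^{-1}=v\rangle$ in exact parallel to the amalgamated case of Section~\ref{sec2}, replacing the reduced form of Proposition~\ref{prop:reduced} by Britton's normal form $g = h_0 t^{\epsilon_1} h_1 \cdots t^{\epsilon_r} h_r$ (with no pinches) and the length $\lambda$ by the $t$-length $r$, refined by a length-lexicographic preorder on the intervening $H$-syllables. The hypotheses are tailored to make the associated subgroups $\langle u\rangle$ and $\langle v\rangle$ rigid: since $H$ is free and $u,v$ are not proper powers, both are maximal infinite cyclic subgroups, and the condition that $u$ is not conjugate in~$H$ to $v^{\pm1}$ plays the role of the malnormality hypothesis of Assumption~\ref{ass:A}, controlling how Britton reductions can occur. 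Granting an HNN analogue of Theorem~\ref{thm:3cases} (available in~\cite{FMRSW}), any finite generating set of a subgroup~$U$ can be carried by a regular Nielsen transformation to a Nielsen-reduced set $\{x_1,\dots,x_n\}$ falling into one of three parallel cases: some $x_i = 1$; the $x_i$ freely generate with the length property of case~(2); or a subfamily lies, after conjugation, in~$H$ together with a controlled interaction through the stable letter~$t$ and the defining relation $tut^{-1}=v$.

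For part~(a), I would apply this reduction to a $2$-generated $U = \langle g_1,g_2\rangle$, obtaining $\{x_1,x_2\}$. If $x_i=1$ for some~$i$, then~$U$ is cyclic, hence abelian. If the free case occurs, then $\{x_1,x_2\}$ is a free basis and~$U$ is free of rank two. In the remaining case the reduction forces both generators, up to conjugacy, into~$H$ or into the cyclic subgroups linked by $tut^{-1}=v$; using that $2$-generated subgroups of the free group~$H$ are free of rank $\le 2$, and that the only abelian configuration compatible with the relation is cyclic, one concludes that~$U$ is abelian or free of rank two. The hypotheses on $u,v$ are precisely what rules out a Baumslag--Solitar-type relation $tu^mt^{-1}=u^m$, which would otherwise produce a non-free, non-abelian (indeed $\mathbb{Z}^2$) $2$-generated subgroup.

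For part~(b), I would repeat the reduction for a $3$-generated $U = \langle g_1,g_2,g_3\rangle$ and analyse the Nielsen-reduced set $\{x_1,x_2,x_3\}$ exactly as in the proof of Theorem~\ref{thm:4-gen}. Either some $x_i=1$, so that~$U$ has rank $\le 2$ and is covered by part~(a); or the generators freely generate and~$U$ is free of rank $\le 3$; or the terminal case produces precisely one essential relation. This last relation arises, as in the amalgamated setting, from a single subword driven into the associated cyclic subgroup by $tut^{-1}=v$, and the rigidity of $\langle u\rangle$ and $\langle v\rangle$ guarantees that at most one such relation survives, so that~$U$ admits a one-relator presentation on its three generators.

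The main obstacle, just as in Section~\ref{sec2}, is establishing the cancellation lemmas that make the Nielsen method work for HNN extensions: one must prove the HNN analogues of the two propositions bounding $\lambda(xyz)$, showing that a Britton pinch $t^{-\epsilon}ht^{\epsilon}$ can shorten a product only when~$h$ lies in the relevant associated subgroup, and quantifying by how much. This is exactly where ``$u,v$ not proper powers'' and ``$u \not\sim v^{\pm1}$ in~$H$'' are essential, since they forbid accidental pinches and force the interaction with~$t$ to be unique up to the single relation $tut^{-1}=v$. Keeping careful track of the conjugating elements (in the spirit of Situations~I and~II in the proof of Theorem~\ref{thm:4-gen}), so that the surviving relation is genuinely a \emph{single} relator rather than several, will be the most delicate step.
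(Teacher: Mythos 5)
First, a point of comparison that matters here: the paper does not prove Theorem~\ref{thm:23HNN} at all. It is explicitly \emph{recalled} from~\cite{FRR} (``the following result from~\cite{FRR} is the best we know''), with a variant for $2$-generated subgroups attributed to~\cite[Thm.~1.6.42]{FMRSW}. So there is no in-paper argument to measure you against, and your overall strategy --- transplanting the Nielsen method of Section~\ref{sec2} to HNN extensions via Britton's normal form, with $t$-length in place of $\lambda$ and the hypotheses ``$u,v$ not proper powers'' and ``$u\not\sim_H v^{\pm1}$'' playing the role of malnormality in Assumption~\ref{ass:A} --- is indeed the right family of ideas and is in the spirit of the cited source. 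Your observation that the hypotheses force $\langle u\rangle$ and $\langle v\rangle$ to be maximal cyclic and rule out Baumslag--Solitar-type pinches is correct and is genuinely the reason the theorem is true.

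Nevertheless, as a proof your proposal has a genuine gap: the two load-bearing components are precisely the ones you defer. (i) You ``grant'' an HNN analogue of Theorem~\ref{thm:3cases} as ``available in~\cite{FMRSW},'' but the paper only invokes~\cite[Thm.~1.6.42]{FMRSW} for a $2$-generator statement (and without the conjugacy hypothesis); a full Nielsen trichotomy for $n$-tuples in HNN extensions, including the correct form of the terminal case, is exactly the analogue of the two cancellation propositions of Section~\ref{sec2}, and you yourself flag proving them as ``the main obstacle'' without supplying them. Note also that the terminal case in the HNN setting is more delicate than your paraphrase: one does not merely get a subfamily conjugated into~$H$, but elements conjugated into~$H$ or into the associated subgroups $\langle u\rangle$, $\langle v\rangle$, together with a generator that may act as a stable letter relative to them. (ii) For part~(b), the assertion that ``the rigidity of $\langle u\rangle$ and $\langle v\rangle$ guarantees that at most one such relation survives'' is not an argument --- it is a restatement of the conclusion. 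The analogue of the Situations~I/II bookkeeping in the proof of Theorem~\ref{thm:4-gen} (minimal power of~$u$ in a subgroup, conjugation tracking, ruling out intermediate conjugates of powers of~$v$) must actually be carried out here, and it is where all the work lies. One further remark on part~(a): under your hypotheses the intersection $\langle u\rangle\cap x\langle v\rangle x^{-1}$ is trivial for all $x\in H$ (a nontrivial intersection $u^m=xv^nx^{-1}$ would, by extraction of roots in the free group~$H$, force $u\sim_H v^{\pm1}$), so $G$ is word hyperbolic by~\cite{KMy} and contains no $\mathbb{Z}\times\mathbb{Z}$; this both confirms your remark that the abelian alternative is cyclic and would be a cleaner way to dispose of the abelian case than the unproved claim about ``the only abelian configuration compatible with the relation.'' In summary: right strategy, consistent with the cited literature, but the proposal is a program rather than a proof, since the cancellation lemmas and the single-relator analysis --- the entire mathematical content of the theorem --- are postulated rather than established.
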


Notice that HNN extensions of the special type in this theorem are also called
\textbf{conjugacy pinched one-relator groups}. Recall that a 2-generated subgroup $\langle x,y\rangle$
of~$G$ is called \textbf{maximal} if it is not properly contained in another 2-generated subgroup,
and it is called \textbf{strongly maximal} if for every $g\in G$ there exists an element
$h\in G$ such that $\langle x, gyg^{-1} \rangle \subseteq \langle x,hyh^{-1}\rangle$ and
$\langle x, hyh^{-1}\rangle$ is a maximal 2-generated subgroup of~$G$.

\begin{cor}
In the setting of the theorem, assume that the subgroup $\langle u,v\rangle$ of~$G$
is strongly maximal. Then~$G$ is 3-free, \ie, every 3-generated subgroup of~$G$ is free.
\end{cor}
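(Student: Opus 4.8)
The plan is to feed an arbitrary $3$-generated subgroup into the dichotomy of Theorem~\ref{thm:23HNN} and to show that, under the strong maximality hypothesis, the second (one-relator) alternative can only produce free groups. So let $U = \langle g_1,g_2,g_3\rangle$ be a $3$-generated subgroup of~$G$. If $\rk(U)\le 2$, I would apply Theorem~\ref{thm:23HNN}.a: then $U$ is free of rank two or abelian. Here I would use that, because $u$ and~$v$ are not proper powers and $u$ is not conjugate in~$H$ to $v$ or~$v^{-1}$, the group~$G$ contains no subgroup isomorphic to $\mathbb{Z}\times\mathbb{Z}$ (the two forbidden conjugacies are exactly what would otherwise let a conjugate of~$t$ centralize, resp.\ invert, a power of~$u$). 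Hence every abelian subgroup of~$G$ is cyclic, so $U$ is free in this case. Thus I may assume $\rk(U)=3$, and by Theorem~\ref{thm:23HNN}.b I may assume that $U$ admits a one-relator presentation $\langle a,b,c\mid R\rangle$ with $R\ne 1$; the goal is to prove that this forces $U$ to be free.

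Next I would locate the relation. In a conjugacy pinched one-relator group every relation is a consequence of the defining relation $t\,u\,t^{-1}=v$, so applying the Britton-type normal form and the Nielsen reduction for HNN extensions on which Theorem~\ref{thm:23HNN} rests, the non-triviality of~$R$ should force one of the Nielsen-reduced generators, say~$s$, to play the role of a stable letter relative to the other two. Concretely, after a Nielsen transformation and a conjugation I expect to arrange $a,b\in H$ together with a relation $s\,w_1\,s^{-1}=w_2$ in~$U$, for suitable non-trivial $w_1,w_2\in\langle a,b\rangle$, where $w_1$ is conjugate in~$G$ to a power of~$u$ and $w_2$ to the corresponding power of~$v$. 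In particular $U$ would contain the $2$-generated subgroup $W=\langle w_1,w_2\rangle$, which after a further conjugation takes the standard shape $W=\langle u^{k},\,g\,v^{k}g^{-1}\rangle\subseteq\langle u,\,g\,v\,g^{-1}\rangle$ for some $g\in G$ and some $k\ge 1$.

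Then I would invoke strong maximality. Since $\langle u,v\rangle$ is strongly maximal, the subgroup $\langle u,\,g\,v\,g^{-1}\rangle$ lies inside a maximal $2$-generated subgroup $M=\langle u,\,h\,v\,h^{-1}\rangle$, so $W$ embeds in~$M$ and~$s$ conjugates the ``$u$-part'' of~$M$ onto its ``$v$-part''. By maximality of~$M$ no $2$-generated subgroup properly contains it, and I would argue that this pins down~$s$: the conjugating generator must normalize the edge data and hence agree, up to an element of~$H$, with the stable letter~$t$ (or a power thereof). But then the relation $s\,w_1\,s^{-1}=w_2$ is merely a restriction of $t\,u\,t^{-1}=v$, so it is already a consequence of the presentation of~$G$ and imposes no new identity among $a,b,s$; equivalently,~$s$ is absorbed into~$H$ and $\rk(U)\le 2$, contradicting $\rk(U)=3$ unless~$R$ was redundant, that is, unless~$U$ is free. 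This would complete the proof that~$G$ is $3$-free.

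The main obstacle will be the middle step: making precise, purely inside the HNN machinery, that a genuine one-relator presentation of a $3$-generated subgroup must come from an honest pinch $\langle u^{k},\,g\,v^{k}g^{-1}\rangle$ carried by a stable-letter generator, and then extracting from the maximality of $M=\langle u,\,h\,v\,h^{-1}\rangle$ that this generator collapses. This needs the detailed normal form and Nielsen reduction for HNN extensions underlying Theorem~\ref{thm:23HNN}, together with a careful verification that the conjugations used to bring~$W$ into standard form preserve both the rank and the one-relator structure of~$U$, so that no case is lost when strong maximality is applied.
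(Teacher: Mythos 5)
First, a caveat: the paper itself contains no proof of this corollary to compare against --- it is stated bare after Theorem~\ref{thm:23HNN}, with both results attributed to~\cite{FRR} --- so your proposal has to be judged on its own merits. Your skeleton is the natural one: feed $U$ into the dichotomy of Theorem~\ref{thm:23HNN}, dispose of the abelian case, and use strong maximality to eliminate the one-relator branch. The abelian step is essentially fine and can be made rigorous: uniqueness of roots in the free group $H$ gives $\langle u\rangle\cap x\langle v\rangle x^{-1}=\{1\}$ for all $x\in H$, so $G$ is torsion-free word hyperbolic exactly as in Subcase~2b of the proof of Theorem~\ref{thm:SGCforHNN}, whence abelian subgroups are infinite cyclic and in particular free.

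The heart of the argument, however, has two genuine gaps, and the second is a demonstrably false inference. (i) The normalization claim --- that the one-relator alternative always reduces, after Nielsen transformations and conjugation, to $a,b\in H$ together with a single pinch $s\,w_1\,s^{-1}=w_2$ with $w_1,w_2\in\langle a,b\rangle$ conjugate to powers of $u$, resp.~$v$ --- is only ``expected'' in your text, and you concede it; but this is where the substance lives, and nothing you write supplies it. (ii) Even granting~(i), the final step fails. From $W\subseteq M$ with $M$ a maximal $2$-generated subgroup, nothing follows about the conjugator $s$: maximality constrains $2$-generated overgroups of~$M$, not elements conjugating one part of~$M$ to another, so ``$s$ must agree, up to an element of~$H$, with~$t$'' is unsupported. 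Worse, even if $s$ literally equals~$t$, the conclusion ``the relation is a consequence of $tut^{-1}=v$, hence imposes no new identity and $\rk(U)\le 2$'' is invalid: \emph{every} relation in \emph{every} subgroup of~$G$ is a consequence of $tut^{-1}=v$. Concretely, take $H$ free on $a,b$ and $u=[a,b]$, $v=a^2b^2$; neither is a proper power, and $u$ is not conjugate in~$H$ to $v^{\pm 1}$ (compare abelianized images), so all hypotheses of Theorem~\ref{thm:23HNN} hold. Then $U=G=\langle a,b,t\mid t[a,b]t^{-1}=a^2b^2\rangle$ is a $3$-generated subgroup whose ``stable'' generator is exactly~$t$ and whose single relation is literally an instance of $tut^{-1}=v$, yet $U$ is not free, since its abelianization $\mathbb{Z}^3/\langle(2,2,0)\rangle\cong\mathbb{Z}^2\oplus\mathbb{Z}/2\mathbb{Z}$ has torsion. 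Your final inference would wrongly declare this group free. If the corollary is true, strong maximality of $\langle u,v\rangle$ must fail in such configurations (here $\langle u,v\rangle$ is not even maximal, being proper in the $2$-generated $\langle a,b\rangle$), and a correct proof must use the hypothesis to collapse the pinched configuration into a $2$-generated subgroup --- making one generator redundant via $s\,w_1\,s^{-1}=w_2$, forcing $\rk(U)\le 2$, and then invoking part~(a). Your write-up gestures at such a collapse but never performs it, so as it stands the one-relator branch is not eliminated.
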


As regards to the surface group conjecture, we can show it for the following
type of HNN extensions. Notice that the following result is slightly stronger
than~\cite[Thm.~3.1.2]{CFR} and~\cite[Thm.~1.7.22]{FMRSW}.

\begin{thm}[The Surface Group Conjecture for HNN Extensions]$\mathstrut$\label{thm:SGCforHNN}\\
Let $H$ be a finitely generated group, let $u,v\in H$ be non-trivial,
and let~$G$ be the HNN extension $G= \langle t,H \mid tut^{-1} = v \rangle$.
Assume that $G$ is not free, but every subgroup of infinite index in~$G$ is free.
Then the Surface Group Conjecture~\ref{conj:SG} holds for~$G$.
\end{thm}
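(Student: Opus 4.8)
The plan is to follow the proof of Theorem~\ref{thm:SGCforAFP}, the one genuinely new feature being that the two associated elements $u$ and $v$ now lie in the \emph{same} base group $H$ and may be conjugate there, which is where Baumslag--Solitar and torus-knot phenomena enter. First I would reduce to the case that $H$ is free. The exponent--sum homomorphism $\epsilon\colon G\to\mathbb{Z}$ with $\epsilon(t)=1$ and $\epsilon(H)=0$ is well defined, since the relator $tut^{-1}v^{-1}$ has exponent sum $0$, and it is surjective with $H\subseteq\ker\epsilon$. As $\ker\epsilon$ has infinite index in $G$, so does $H$, and the hypothesis forces $H$ to be free; being non-trivial elements of a free group, $u$ and $v$ then have infinite order, so $\langle u\rangle\cong\langle v\rangle\cong\mathbb{Z}$ and $G$ is a finitely presented, torsion-free \textbf{conjugacy pinched one-relator group}. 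This is the setting of~\cite[Thm.~3.1.2]{CFR}, whose main steps I would recall as follows.

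Next I would dispose of the non-hyperbolic configurations. Writing $u=g^p$ and $v=h^q$ with $g,h$ not proper powers, the relation yields $g^p=(t^{-1}ht)^q$, so $\langle g,\,t^{-1}ht\rangle$ satisfies a torus-knot relation, and one checks that for $p,q\ge 2$ it is the group $\langle x,y\mid x^p=y^q\rangle$, which contains a free abelian subgroup of rank two. Similarly, if $u$ is conjugate in $H$ to $v$ or $v^{-1}$, then after replacing $t$ by a suitable $s=w^{-1}t$ the subgroup $\langle g,s\rangle$ is a Baumslag--Solitar group, which is $\mathbb{Z}^2$ or a Klein bottle group precisely when $p=q=1$. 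In every such case $G$ contains a non-free subgroup, which by hypothesis must have finite index; since $G$ is torsion-free, this either forces $G\cong\mathbb{Z}^2$ (the torus group) or $G\cong\langle a,s\mid sas^{-1}=a^{-1}\rangle$ (the Klein bottle group), both surface groups, or else yields a contradiction, because all the remaining Baumslag--Solitar and torus-knot groups themselves contain non-free subgroups of infinite index. Hence I may assume that neither $u$ nor $v$ is a proper power and that $u$ is not conjugate in $H$ to $v$ or $v^{-1}$.

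In this remaining case the two cyclic edge groups are non-conjugate and malnormal in the free group $H$, so the HNN analogue of~\cite[Thm.~A]{JR} shows that $G$ is word hyperbolic. The ends argument then proceeds exactly as in Theorem~\ref{thm:SGCforAFP}: by~\cite[Thm.~4.11]{Sta} a torsion-free group with more than one end is either two-ended --- hence $\cong\mathbb{Z}$ and free --- or a non-trivial free product $A*B$ whose factors have infinite index and are thus free, making $G$ free; as both are excluded, $G$ is one-ended. Finally I would apply the conjugacy pinched analogue of H.~Wilton's theorem~\cite[Cor.~4]{Wil1}, namely that a hyperbolic, one-ended conjugacy pinched one-relator group all of whose infinite index subgroups are free is a surface group, to complete the proof.

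I expect this last case to be the main obstacle, for two reasons. First, one must isolate the exact word-hyperbolicity criterion for conjugacy pinched one-relator groups and confirm that its failure is covered precisely by the proper-power and conjugacy configurations treated above, so that no free abelian, Baumslag--Solitar, or torus-knot subgroup is overlooked. Second, and more seriously, Wilton's surface-group result is recorded in the literature for cyclically pinched (amalgamated) one-relator groups, and the crux is to transport it to the conjugacy pinched (HNN) setting; this is exactly where the argument must go beyond the amalgamated Theorem~\ref{thm:SGCforAFP}.
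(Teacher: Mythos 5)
Your overall architecture matches the paper's: reduce to free $H$ (your exponent-sum map is the paper's normal closure $N$ of $H$ with $G/N$ infinite cyclic), dispose of the non-hyperbolic configurations, then hyperbolicity plus the ends argument of Theorem~\ref{thm:SGCforAFP} plus Wilton; and your worry about the conjugacy-pinched form of Wilton's theorem is something the paper resolves simply by citing~\cite{Wil1} in this setting as well. However, there is a genuine gap: your case analysis is not exhaustive. You dispose of (i) $u,v$ both proper powers ($p,q\ge 2$) and (ii) $u$ conjugate in $H$ to $v^{\pm 1}$, and then assume for the residual case that \emph{neither} $u$ nor $v$ is a proper power and $u\not\sim_H v^{\pm1}$. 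The configuration in which \emph{exactly one} of $u,v$ is a proper power falls through entirely, and it contains precisely the Baumslag--Solitar obstruction: take $H=F(a,b)$, $u=a$, $v=a^2$, or more generally $v=xu^kx^{-1}$ with $\vert k\vert\ge 2$. Here $p=1$, so your torus-knot argument yields nothing; $u\not\sim_H v^{\pm1}$ (conjugation in a free group preserves cyclically reduced length), so your case (ii) does not apply; and your residual hypothesis is simply false, since $v$ \emph{is} a proper power. Moreover $G$ then contains $B(1,k)=\langle u,t\mid tut^{-1}=u^k\rangle$ and is not word hyperbolic, so no weakening of the residual argument can absorb it. Indeed, your residual hypothesis is strictly stronger than the condition that actually drives hyperbolicity, namely $\langle u\rangle\cap x\langle v\rangle x^{-1}=\{1\}$ for all $x\in H$ (by uniqueness of roots in free groups, an infinite intersection forces $u\sim v^{\pm1}$ or one of $u,v$ to be a proper power of a conjugate of the other), and your cases (i)--(ii) eliminate only part of its complement.

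This omitted case is exactly the paper's Subcase 2a, and it organizes Case 2 by the exhaustive dichotomy ``$\langle u\rangle\cap x\langle v\rangle x^{-1}$ infinite for some $x\in H$'' versus ``trivial for all $x$'', invoking~\cite{KMy} for hyperbolicity in the latter. In the former, it normalizes to $v=u^k$ with $u$ not a proper power and $k\ne 0$, identifies $U=\langle u,t\rangle$ with $B(1,k)$ via the HNN normal form, and for $\rk(H)\ge 2$ shows $U$ has \emph{infinite} index by passing to the normal closure $V$ of $U$ and applying the Freiheitssatz to see that $G/V\cong\langle a_1,\dots,a_n\mid u=1\rangle$ is infinite --- a contradiction, as $B(1,k)$ is never free; for $\rk(H)=1$ one has $G=B(1,k)$ itself, a surface group when $\vert k\vert=1$, while for $\vert k\vert=p\ge 2$ the subgroup $\mathbb{Z}[\frac{1}{p}]$ of $B(1,p)\cong\mathbb{Z}[\frac{1}{p}]\rtimes\mathbb{Z}$ is non-free of infinite index. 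Note that your proposal also never confronts this $\rk(H)=1$ possibility, where $G$ itself is Baumslag--Solitar. A secondary soft spot: in your cases (i)--(ii) you argue indirectly that a non-free subgroup ``must have finite index'' and then appeal to an unproved classification of torsion-free virtually $\mathbb{Z}^2$ groups, where the paper instead exhibits the offending subgroups (e.g.\ $\langle w^m,wh\rangle$ in $W=\langle w,h\mid w^m=h^n\rangle$) directly as free abelian of rank~2 and of infinite index; your route could likely be repaired, but in any event it does not close the missing $B(1,k)$ case above.
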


\begin{proof}
Let~$N$ be the normal closure of~$H$ in~$G$. Then $G/N$ is an infinite cyclic group.
Hence the group~$H$ is of infinite index in~$G$, and the hypothesis implies that~$H$
is a free group. Thus~$G$ is a conjugacy pinched one-relator group.
For such groups, the surface group conjecture can be shown based in part on~\cite[Thm.~3.1.2]{CFR}.
Let us follow the main steps and complete the proof at the appropriate place.
We distinguish two cases depending on whether~$u$ or~$v$ are proper powers of elements of~$G$.

\smallskip
\noindent{\it Case 1:}\/ Assume that $u = g^m$ and $v = h^n$ with $g,h\in G$ and with $m,n\ge 2$.
If $G = \langle g,h \mid g^2 = h^2\rangle$ then~$G$ is a surface group 
and contains a free abelian subgroup of rank~2. Now assume that~$G$ is not isomorphic 
to  $\langle g,h \mid g^2 = h^2 \rangle$. 
Using the normal form in HNN groups (\cf~\cite[Thm.~1.6.4]{FMRSW}), it follows that
the subgroup generated by $w=tgt^{-1}$ and~$h$ has the presentation $W=\langle w,h \mid w^m = h^n\rangle$.
Then the subgroup $\langle w^m,wh\rangle$ of~$W$ is a free abelian group of rank~2 and has infinite
index in~$G$, in contradiction to the hypothesis.

\smallskip
\noindent{\it Case 2:}\/ Assume that~$u$ or~$v$ is not a proper power in~$G$.
Here we distinguish two subcases.

\smallskip
\noindent{\it Subcase 2a:}\/ Assume that there exists an element $x\in H$ such that
the intersection 
$\langle u\rangle \cap x\langle v\rangle x^{-1}$ is infinite, and thus infinite cyclic.
After a suitable conjugation and possibly interchanging~$u$ and~$v$, we may assume 
that~$u$ is not a proper power in~$H$ and $v=u^k$ with $k\ne 0$.
Using the normal form, we can see that the subgroup of~$G$ generated by~$u$ and~$t$
has the presentation $U = \langle u,t \mid tut^{-1} = u^k\rangle$. 

In the case $\rk(H)\ge 2$, let~$V$ be the normal closure of~$U$ in~$G$, and let 
$H=\langle a_1,\dots,a_n\rangle$. Then $G/V  \cong \langle a_1,\dots,a_n \mid u=1\rangle$
is infinite by the Freiheitssatz. Hence~$U$ is a non-free subgroup of infinite index in~$G$,
in contradiction to the hypothesis.

In the case $\rk(H)=1$, we let $H=\langle a\rangle$. Then we get $G=\langle a, t \mid tat^{-1} 
= a^k\rangle = B(1,k)$ with a Baumslag-Solitar group $B(1,k)$. For $\vert k \vert = 1$, this
is a surface group, and for $p=\vert k \vert \ge 2$, the group $B(1,p)$ is isomorphic 
to $\mathbb{Z}[\frac{1}{p}]
\rtimes \mathbb{Z}$ (\cf~\cite{FM}) and contains $\mathbb{Z}[\frac{1}{p}]$ 
as a non-free subgroup of infinite index. Hence, also in the case $G\cong B(1,k)$ 
with $\vert k \vert \ge 2$, the group~$G$ has a non-free subgroup
of infinite index, in contradiction to the hypothesis.

\smallskip
\noindent{\it Subcase 2b:}\/ Assume that $\langle u\rangle \cap x \langle v\rangle x^{-1}$
is finite for every $x\in H$. Since~$G$ is torsion free, we get 
$\langle u\rangle \cap x \langle v\rangle x^{-1} = \{1\}$. By~\cite{KMy}, it follows 
that~$G$ is word hyperbolic. As in the proof of Theorem~\ref{thm:SGCforAFP}, we see that~$G$
is one-ended, and then an application of~\cite{Wil1} finishes the proof.
\end{proof}

\bibliographystyle{plain}
\bibliography{SubgrOfAmalgFP.bib}

\end{document}